\newtheorem{theorem}{Theorem}%  meant for continuous numbers
\newtheorem{lemma}[theorem]{Lemma}% 
\newtheorem{remark}{Remark}%
\newtheorem{corollary}{Corollary}%
\newtheorem{assumption}{Assumption}
\date{}
\author{Alexey D. Kislovskiy\qquad\qquad Eduard Yu. Lerner\\  Igor A. Senkevich}
\title{Slack-Pack algorithm for Meir-Moser packing problem}
\begin{document}
\maketitle
\begin{abstract}
The well-known problem stated by A.~Meir and L.~Moser consists in tiling the unit square with rectangles (details), whose side lengths equal $1/n\times 1/(n+1)$, where indices~$n$ range from 1 to infinity. Recently, Terence Tao has proved that it is possible to tile with $1/n^t\times1/(n+1)^t$ rectangles (squares with the side length of $1/n^t$), $1/2<t<1$, the square, whose area equals the sum of areas of these details, provided that only those details, whose indices exceed certain~$n_0$, are taken into consideration. We adduce arguments in favor of the assumption that the result obtained by T.~Tao is also valid for $t=1$. We use a new tiling method (the Slack-Pack algorithm), which initially admits gaps between stacks of details. The algorithm uses a pre-fixed parameter $\gamma$, $\sqrt{3/2}<\gamma<3/2$, connected with the gap value. The new algorithm allows one to control the ratio of the area of the large rectangular part, which is free of details, to the whole area of the remaining empty space. This ratio (under certain natural assumptions) always exceeds $1-1/\gamma-\delta$, where $\delta$ tends to zero as~$n_0$ increases.\\
{\bf Mathematics Subject Classification:} 52C15, 05B40
\end{abstract}

{\bf Keywords:}
Rectangle packing, Square packing, Meir–Moser problem, Harmonic series.

\section{Introduction}\label{sec1}

Consider $\frac{1}{n}\times \frac{1}{n+1}$ rectangles, $n\in\mathbb N$ (in what follows, we treat them as details). Evidently, the total area of all these rectangles equals one. We say that a packing of details in a rectangular sheet is perfect, if each detail in it intersects no other one, while the total area of all details equals the area of the sheet. In 1968, A.~Meir and L.~Moser~\cite{moser} stated the question of whether there exists a perfect packing of such details in the unit square.
They also have stated a similar square packing problem; namely, they were interested of whether there exists a perfect packing of squares, whose side lengths equal $\frac{1}{n}$, $n\in\mathbb N$, $n\geq 2$, in a rectangle, whose area equals $\sum_{n=2}^{\infty} \frac{1}{n^{2}} = \frac{\pi^{2}}{6}-1$. Though there are many papers devoted to these problems~\cite{moser, tao, paulhas, pseudopaulhas, polish, china, stack-pack, Chalcraft, joosMathRep, Janus23, Januslast}, they still remain unsolved. 

Note that problems on quadrating a rectangle are well known in combinatorics~\cite{tutte}. One of their traditional interpretations consists in constructing an electrical circuit, which transmits the unit electric current between two terminals with the potential difference of $\pi^2/6-1$ with the help of infinitely many wires, whose conductivities equal $1/2, 1/3, 1/4, \ldots$. However, in this case, as distinct from the Tutte problem, the way in which we can apply Kirchhoff laws is not clear. 

There are several algorithms for solving the Meir-Moser problem. M.~Paulhus (see \cite{paulhas}) has applied one variant of the greedy algorithm, which implies that for packing a current detail, one chooses an empty rectangular box with the minimal admissible width (if the number of such boxes is more than one, then one chooses the box with the minimal height).
Then one places the detail in a corner of the chosen box and performs the first cut. This cut (performed along the detail edge parallel to the lesser box side) results in dividing the box onto two parts (see the second illustration in Fig.~\ref{fig:1}). As a result of the second cut performed in one of these parts, one finally gets the cut-out detail and two new boxes. These boxes can be further used for packing 
details\footnote{Clearly, if the detail and the initial box have the same sizes, then no new boxes appear.}.

\begin{figure}[h]
    \centering
    \includegraphics[width=0.5\textwidth]{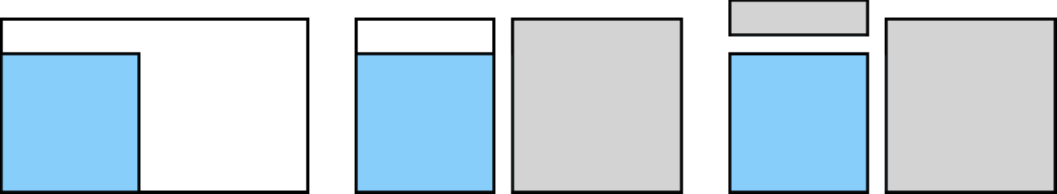}
    \caption{Illustration of the work of the Paulhus algorithm in packing details in a chosen box.}
    \label{fig:1}
\end{figure}

M.~M.~Paulhus has succeeded in packing $10^9$ details in the unit square and estimating (admitting some inaccuracies) the area of the empty space, which is necessary for packing the rest details. This result, together with the algorithm, was described more accurately in papers~\cite{pseudopaulhas} and~\cite{polish} and improved in~\cite{china}. The packing proposed in papers~\cite{paulhas} and~\cite{china} has one specific feature, namely, in one corner of the initial sheet there exists an empty rectangular space (in what follows, we call this empty space the {\it Large Rectangular Piece}, LRP), whose area constitutes a significant part of the total area of all the rest details (see~Fig.~\ref{fig:2}).
\begin{figure}[h]
    \centering
    \includegraphics[width=0.5\textwidth]{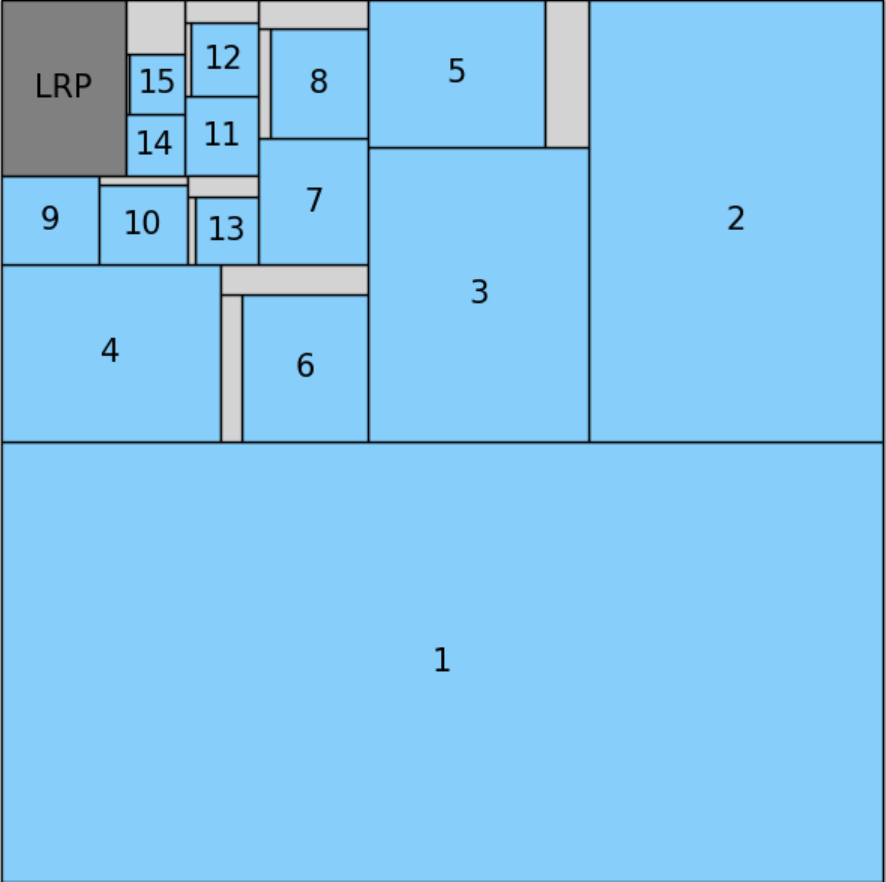}
    \caption{Illustration of the work of the Paulhus algorithm in packing first 15 rectangles with side lengths of $\frac{1}{n}\times \frac{1}{n+1}$, $n\in\mathbb N$.}
    \label{fig:2}
\end{figure}
If this space remains empty during the whole algorithm operation process, then it is possible to pack all details. However, according to results obtained in papers~\cite{paulhas} and~\cite{china}, one cannot be certain of whether the ratio of the area of the empty space in a corner of the sheet to the total area of all the rest details, which have to be packed, tends to a constant value. In experiments, where the number of packed details varied from $10^4$ to $10^{11}$, this ratio fluctuated. Namely, first it increased from $0.344$ to $0.358$, then decreased to $0.34$, then again increased to $0.37$ and then again decreased to $0.357$ (see~Table~1 in~\cite{china}).

In~\cite{stack-pack}, J.~W\"astlund considers the idea of stacking, i.e., packing of the maximal quantity of consecutively numbered details in one stripe (see~Fig.~\ref{fig:3}). This approach allows one to slightly simplify the analysis of the packing. Though it implies that stacks fit snugly together, due to different sizes of details there still exist gaps of different sizes. In paper~\cite{stack-pack}, J.~W\"astlund proves that for any $t$, $t\in (1/2,2/3)$, with sufficiently large values of the initial number $n_0$, one can perfectly pack all details that are squares with the side length of $1/n^t$, $n=n_0,n_0+1,\ldots$, in the corresponding square sheet (whose area equals the total area of all details). This result is based on the correlation between the total area and the half-perimeter of all boxes, where details have to be packed, evaluated earlier in~\cite{Chalcraft}. Namely, if the half-perimeter is less than the ratio of the total area to the size of the currently packed detail, then it is possible to pack this detail. 
\begin{figure}[h]
    \centering
    \includegraphics[width=0.5\textwidth]{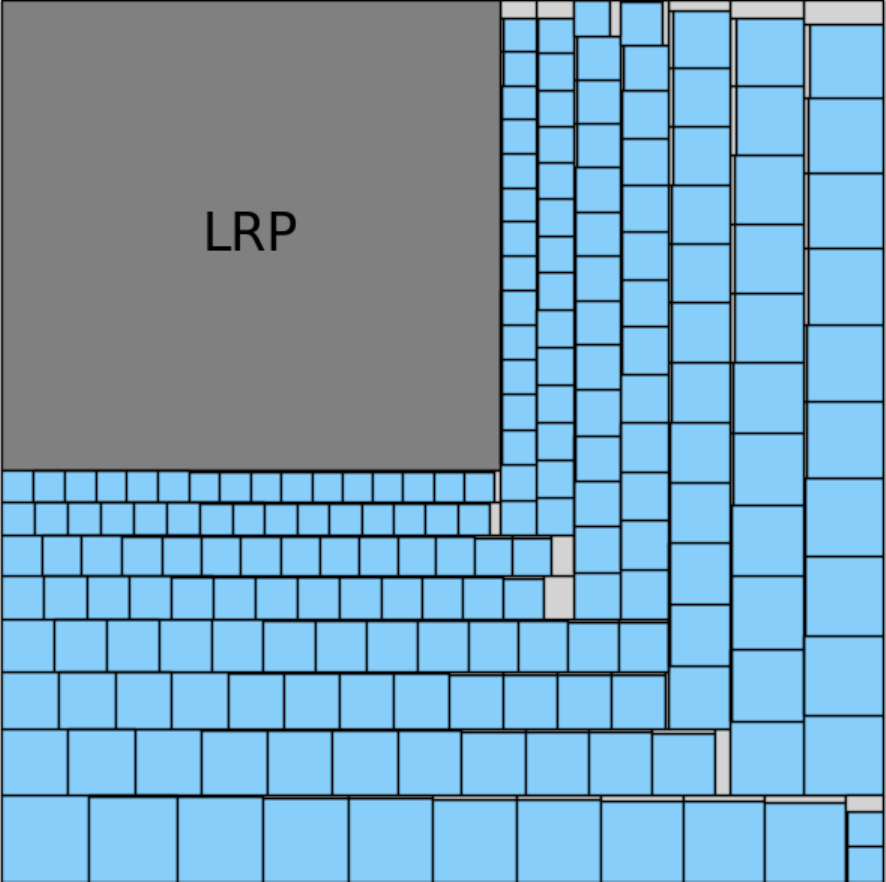}
    \caption{Illustration of the work of the W\"astlund algorithm in packing first 195 $\frac{1}{n}\times \frac{1}{n+1}$ rectangles, $n\in\mathbb N$, $n_0=100$.}
    \label{fig:3}
\end{figure}

Note that authors of the paper~\cite{Chalcraft} and subsequent papers~\cite{joosMathRep},~\cite{Janus23}, and~\cite{Januslast} studied the perfect packing problem for the square sheet, whose area equals $\zeta(2t)$, and square details, whose side length equals~$1/n^t$, where $n$ ranges from 1 to infinity. 
In particular, the existence of such packing was proved by A.~Chalcraft~\cite{Chalcraft} for $t\in [0.5864,0.6]$ and by A.~Jo\'os~\cite{joosMathRep} for $t\in [\log_3 2,2/3]$, $\ \log_3 2\approx 0.63$; J.~Januszewski and L.~Zielonka~\cite{Janus23} have extended this result for $t\in (1/2,2/3)$ and thus obtained a result stronger than that described in~\cite{stack-pack}. Finally, recently J.~Januszewski~\cite{Januslast} has proposed a very simple specific  algorithm for proving the existence of a perfect packing for $t\in(1/2,17/32]$.

In what follows, we study only the perfect packing problem for details with (almost) harmonically decreasing side lengths, starting with some $n_0^{-t}$, $n_0\in \mathbb N$. Terence Tao~\cite{tao} has made a significant progress in solving this variant of the problem. Tao has extended the result obtained by W\"astlund for the case, when the value of the parameter~$t$ is arbitrarily close~1. He has proved that $\bigl(\frac1n\bigr)^t$ squares (as well as $\bigl(\frac1n\bigr)^t\times\bigl(\frac1{n+1}\bigr)^t$) rectangles, $1/2<t<1$, can be perfectly packed in the square, whose area equals the sum of areas of all details, provided that only details, whose indices exceed certain~$n_0$, are taken into consideration. The most important property among established ones consists in the fact that the correlation between total perimeters of packed details and non-packed ones, as well as that of the perimeter and the area of non-packed details, which was used in previous papers, is also valid with all~$t<1$ (but not with $t=1$). Formally, Tao proves the obtained result by induction rather than by an explicit recursive algorithm. However, in the proof of propositions, which are the base for the induction method, he uses properties of the stack packing, where details fit snugly together.

Let us adduce arguments in favor of the assumption that the Tao result is also valid with $t=1$. In this paper, we apply a new packing algorithm, which initially admits the existence of controllable gaps between stacks of details. The gap sizes depend on a certain parameter~$\gamma$, $\sqrt{3/2}<\gamma<3/2$. Analogously to the algorithm proposed by M.M.~Paulhus, having packed a large number of details in a corner of a square sheet, one gets an empty rectangular domain. However, as distinct from the Paulhus algorithm, we can ``control'' the ratio of the area of this domain to the total area of all the rest details. Further we prove one conditional result, namely, the fact that with sufficiently large~$n_0$ this ratio exceeds $1-1/\gamma-\delta$ for arbitrarily small~$\delta$. In other words, while in other algorithms the proportion of LRP is nonstable and its asymptotic behavior is indefinite, in the Slack-Pack algorithm it stabilizes, for example, near the value of $1/4$ with $\gamma=4/3$.

The conditional character of the obtained mathematical result consists in the fact that it is based on certain assumptions (see more details in the next section), which were not proved by us. They follow, in particular, from the hypothesis (which was neither proved by us but seems to be quite natural~\cite{uniform}) that box sizes used in the algorithm are pseudo-random uniformly distributed values. Nevertheless, we have experimentally verified the result on the asymptotics of $1-1/\gamma$, it has proved to be true for packing of up to $10^{10}$ details (see~Fig.~\ref{pic:1} and~Fig.~\ref{pic:2}). Note that experimental results represent an important part of the paper content. They allow one to treat the adduced conclusions not as abstract reasonings based on certain assumptions, but as sound predictions of the relative area of the empty space in a sheet, which were obtained analytically and then confirmed experimentally.

The further part of the paper has the following structure. In Section~\ref{sec2}, we introduce the main terms, describe a packing algorithm, and state the main results. In Section~\ref{sec3}, we prove lemmas on the unification of sizes of boxes obtained in the stacking process and on their relative area. Section~\ref{sec4} is devoted to lemmas on the relative area of the rest boxes and on the final time moment of the first stage of the algorithm. In Section~\ref{sec5}, we prove the main result of this paper. In Conclusion, we describe results of numerical experiments. In Appendix~\ref{secA1}, we prove one auxiliary result on uniformly distributed random values; we need it for justifying our assumption.

\section{The description of the Slack-Pack algorithm and statement of the main results}\label{sec2}
In this section, we describe an improved algorithm for packing squares and rectangles with harmonically decreasing side lengths; we call it the Slack-Pack algorithm. At the end part of this section, we state the main result of this paper.

\subsection{Main terms and denotations}
Recall that we consider an algorithm for packing both rectangles and squares.

\textit{Details} are rectangles or squares which have to be packed; their  side lengths are assumed to decrease harmonically. 
We use the symbol $R_n$ for the $\frac{1}{n}\times \frac{1}{n+1}$ {\it rectangle}, $n\in\mathbb N$, and do $S_n$ for the $\frac{1}{n}\times \frac{1}{n}$ {\it square}, $n\in\mathbb N$. In a general case, we use the denotation $D_n$ for the {\it detail}, whose greater side equals $\frac{1}{n}$.

A \textit{sheet} is a square domain designated for packing details. The area of the sheet equals the total area of all details that have to be packed in it.

A \textit{perfect packing} is a packing such that each detail is packed in the sheet without intersections with other details, while the total area of all details equals the area of the sheet.

The \textit{large rectangular piece, (LRP)} is the largest in area rectangular piece of the sheet obtained in one of its corners as a result of the implementation of the packing algorithm. Initially, the LRP coincides with the whole sheet.

\textit{Boxes} are all the rest empty rectangular parts of the sheet, except the LRP. Initially, the set of boxes is empty.

\textit{Endpoints} are boxes obtained after the first cut made for packing a detail (see~ Fig.~\ref{fig:1}).

\textit{Normal boxes} are boxes obtained as a result of the second cut made for packing a detail (see Fig.~\ref{fig:1}, see also~Fig.~\ref{fig:4}). We use the symbol $B_n$ for the normal box obtained when packing the detail $D_n$.

The \textit{active box} is the box, where the currently considered detail has to be packed. 

Applying the stack packing method, we get boxes of the same types as those obtained by the application of the Paulhus algorithm (see~Fig.~\ref{fig:1}). A specific feature of the stacking technique consists in the fact that the new active box represents the endpoint obtained by cutting the previous active box, provided that its sizes are sufficient for packing the current detail. Therefore, each subsequent detail is being packed in the just obtained endpoint, while the latter has enough space for it (see~Fig.~\ref{fig:4}). We understand a \textit{row} as the set of details that starts with the first one that was packed in the newly chosen active box and ends with the detail such that the space that remains after its packing in the obtained endpoint is insufficient for packing one more detail. We denote the index of the {\it first detail in the current row} by the symbol $N$. 
Correspondingly, $D_N$ is the detail itself.

A \textit{stripe} is the box obtained by immediately cutting the LRP, when it is impossible to choose an active box among available boxes. A stripe always becomes an active box.

\begin{figure}[h]
    \centering
    \includegraphics[width=0.35\textwidth]{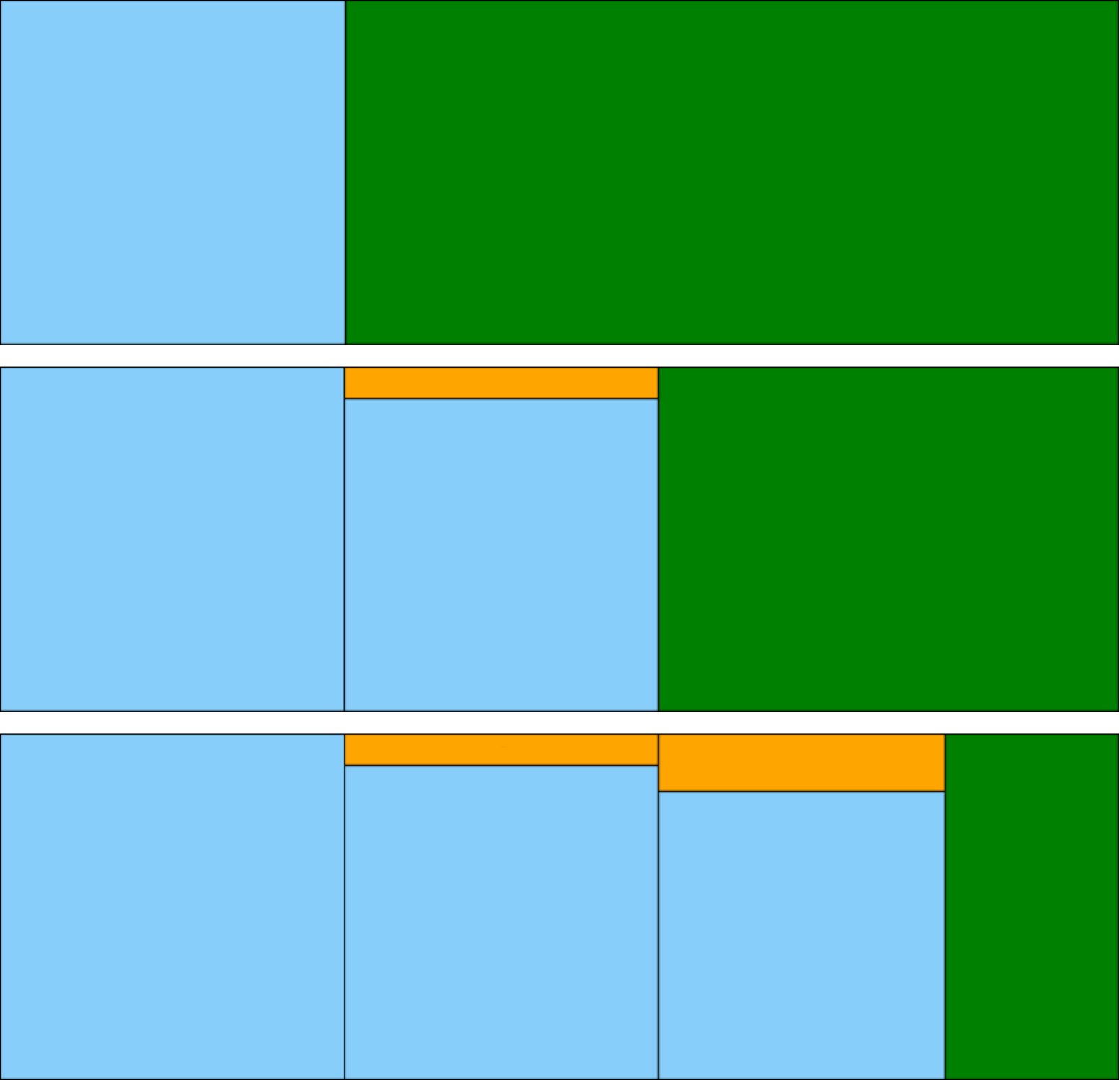}
    \caption{Illustration of the packing of a row of details in a stripe by the stacking method; obtained normal boxes are colored in yellow and endpoints are colored in green.}
    \label{fig:4}
\end{figure}

Our goal is to prove that with all sufficiently large $n_0\in\mathbb N$ we can perfectly pack details $D_n$, $n\geq n_0$, in the corresponding sheet. According to the Slack-Pack algorithm, details are being packed consecutively in ascending order of their indices, i.e., first goes the detail $D_{n_0}$, then does $D_{n_0+1}$, and so on.
If all details, whose indices are less than $t$, are packed already, and we are going to pack the next detail, whose index is $t$, then we assume that the time moment under consideration is $t$.
Denote the set of all boxes obtained by the time moment $t$ as ${\mathcal B}(t)$. It includes the set of normal boxes ${\mathcal B}_{norm}(t)$ and the set of endpoints ${\mathcal B}_{ep}(t)$. 

We treat the lesser side of any rectangular object (a detail, a sheet, a box, etc.) as its \textit{width} and the greater one as its \textit{height}. Symbols~$h$ and $w$ denote functions that define the height and width, correspondingly. For example, $h(B)$ is the height of the box~$B$, while $w(R_n)=1/(n+1)$. The symbol $S(B)$ denotes the area of the box~$B$, i.e., $S(B)=h(B)w(B)$.

The algorithm under consideration belongs to the class of guillotine cutting algorithms, i.e., each cut is being made from edge to edge. This is true both when we cut a detail from the active box (see Fig.~\ref{fig:1}) and when we do a new stripe from the LRP. 

The counterintuitive character of the Slack-Pack algorithm consists in the fact that the currently chosen active box is necessarily such that by packing a detail in it we guaranteedly get both an endpoint and a normal box. This is true, even if the first detail in a row is packed in a stripe. This distinguishes the considered approach from the classic stack packing (see~Fig.~\ref{fig:4}). We treat the empty space deliberately reserved for obtaining normal boxes and endpoints of necessary sizes as the \textit{gap}. 

\subsection{The Slack-Pack algorithm}
This algorithm allows us to perfectly pack both rectangular and square details. The input data for the algorithm is~$n_0$, i.e., the index of the first detail that has to be packed. Let the symbol $n$ denote the current time moment, and let $\gamma$ stand for the parameter used by the algorithm, $\sqrt{3/2}<\gamma<3/2$. 
We will also use the parameter $N$ to control the size of gaps between packed details.

\begin{enumerate}
\item \label{init} [Initialization]. Put $n=n_0$, $N=n_0$, and ${\mathcal B}=\emptyset$. The $LRP$ coincides with the whole sheet, while $B_{act}$ is undefined. Go to Step~\ref{choose-new-active-box}.
\item \label{check-current-active-box} [Consideration of the current active box]. If $h(B_{act})\geq h(D_n)+1/N^\gamma$, then go to Step~\ref{place-detail}. Otherwise go to Step~\ref{choose-new-active-box}.
\item \label{choose-new-active-box} [Choice of a new active box]. Put $N=n$. If ${\mathcal B}=\emptyset$, then go to Step~\ref{cut-stripe}. Otherwise calculate $B_{max} = \arg\max_{B \in {\mathcal B}} w(B)$. If $w(B_{max})\geq w(D_n)+1/n^\gamma$, then put $B_{act} = B_{max}$ and go to Step~\ref{place-detail}. Otherwise go to Step~\ref{cut-stripe}.
\item \label{cut-stripe} [Cutting off a stripe]. Fail, if $w(LRP)<h(D_n)+1/n^\gamma$. Otherwise cut from the LRP a stripe with the following sizes: $B_{str}=w(LRP) \times (w(D_n)+1/n^\gamma)$. 
The part that remains after cutting out the stripe becomes a new LRP, namely, $LRP=w(LRP) \times (h(LRP)-(w(D_n)+1/n^\gamma))$. Put $B_{act} = B_{str}$, ${\mathcal B} = {\mathcal B} \cup \{ B_{str} \}$ and go to Step~\ref{place-detail}.
\item \label{place-detail} [Packing of the detail]. Place the detail in the corner of the active box and thus obtain the endpoint and the normal box with the following sizes: $B_{ep}=w(B_{act}) \times (h(B_{act})-h(D_n))$ and $B_{norm}=h(D_n) \times (w(B_{act})-w(D_n))$. Put ${\mathcal B} = {\mathcal B} \cup \{ B_{ep}, B_{norm} \} \setminus \{ B_{act} \}$, $B_{act} = B_{ep}$, $n=n+1$ and go to Step~\ref{check-current-active-box}.
\end{enumerate}

Let us describe in detail the main stages of the algorithm operation. Initially, when the set of boxes is empty, we can get an active box only by cutting out a stripe. Having packed a row of details in it, we get first boxes. However, since due to their sizes it is still impossible to use anyone of them as an active box, further details will be packed in new stripes. 
For example, in~Fig.~\ref{fig:5}, the first 6 rows of details are packed in stripes that were cut out from the LRP. 

\begin{figure}[ht]
    \centering
    \includegraphics[width=0.5\textwidth]{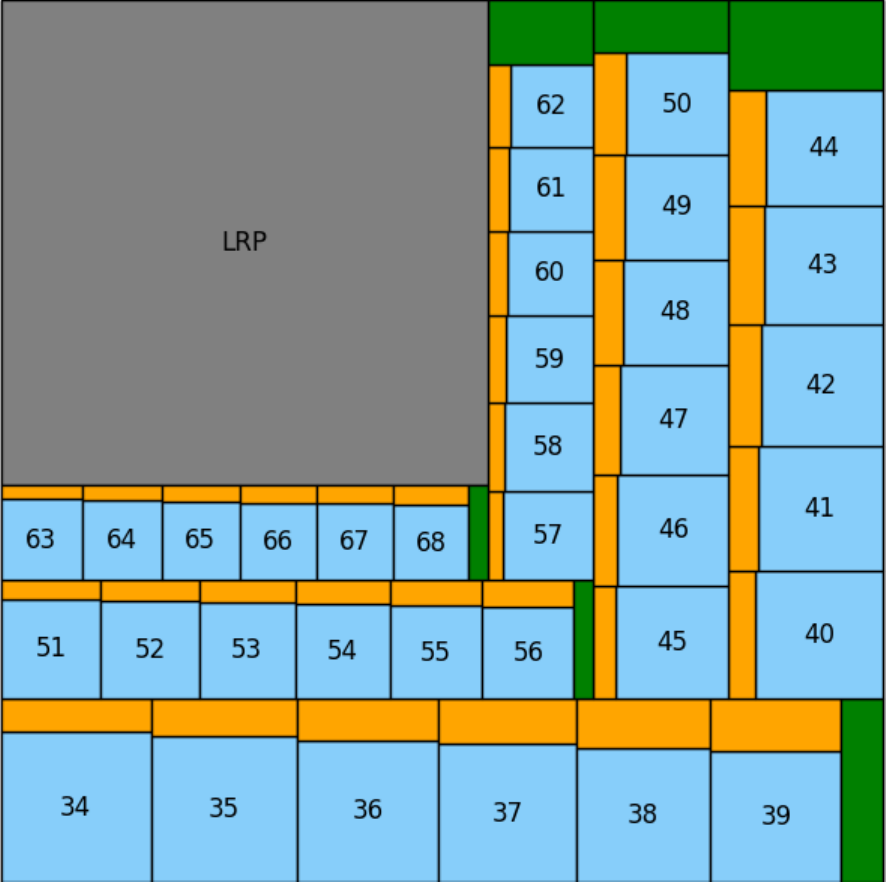}
    \caption{Illustration of the work of the Slack-Pack algorithm in packing first 35 details from the set $S_n$, $n_0=34$, $\gamma=10/7$. Endpoints and normal boxes are colored in green and yellow, correspondingly.}
    \label{fig:5}
\end{figure}

When box sizes are sufficient for packing the current detail with a necessary gap, we choose these boxes as active ones and pack the rest details in them. For example, in~Fig.~\ref{fig:6}, the first active box different from a stripe represents the endpoint obtained by packing the detail~$D_{44}$; further we pack the detail~$D_{69}$ in it.

The algorithm operation has two stages. At the first stage, we choose only stripes or endpoints as active boxes. At the second stage, we can also choose normal boxes as active ones. Fig.~\ref{fig:6} illustrates the starting moment of the second stage. The first normal box that was chosen as an active one is the box $B_{39}$, where details $D_{115}$ and $D_{116}$ were packed. Later we will prove that the second stage necessarily starts, if $n_0$ is large enough.

\begin{figure}[ht]
    \centering
    \includegraphics[width=0.5\textwidth]{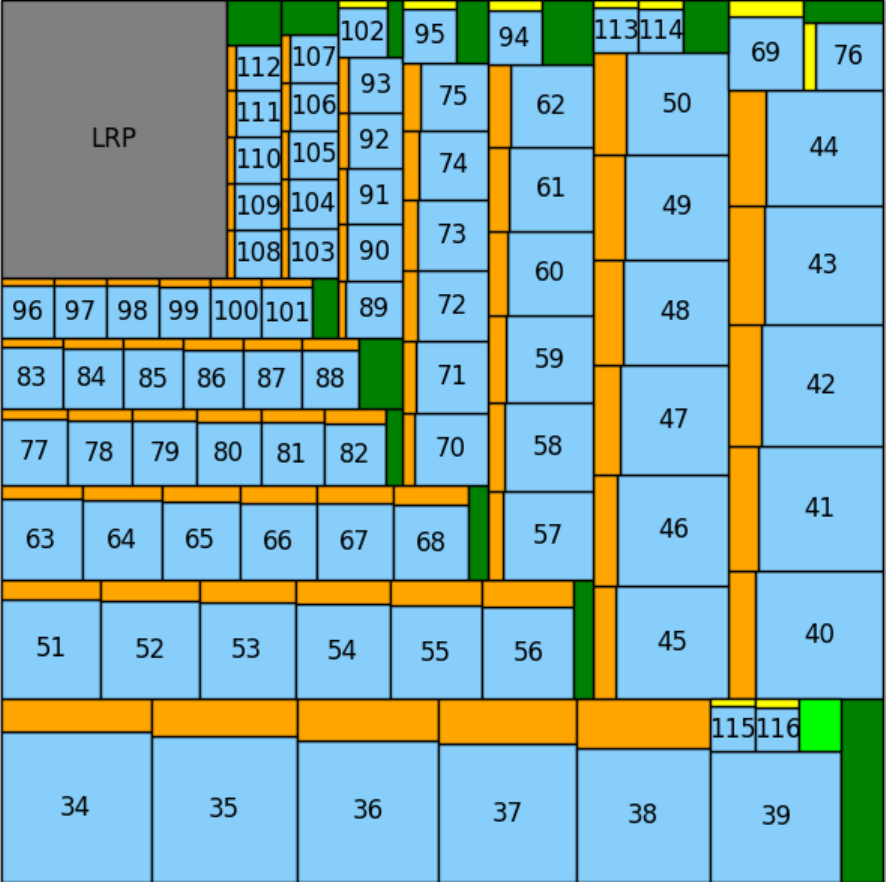}
    \caption{Illustration of the work of the Slack-Pack algorithm in packing first 83 details from the set $S_n$, $n_0=34$, $\gamma=10/7$. Endpoints and normal boxes are colored in shades of green and yellow, correspondingly.}
    \label{fig:6}
\end{figure}

\subsection{The main theorem 
}
Let the symbol~$t_0$ stand for the {\it starting moment of the second stage} (if it ever starts).
Later we will prove (see Lemma~\ref{first}) that $t_0\sim n_0^\gamma$, where the notation \textit{$\sim$ means that the limit of the ratio $t_0 / n_0^\gamma$ viewed as a function of $n_0$ tends to $1$ as $n_0 \to \infty$}.
Evidently, the onset of the time moment $t_0$ does not necessarily imply the infinity of the algorithm operation. The necessary and sufficient condition for it is the possibility of the implementation of Step~\ref{cut-stripe}, i.e., the sufficiency of sizes of the LRP for cutting the next stripe. In what follows, we also use the symbol~$\sim$ for comparing the behavior of other parameters of the algorithm with $n_0\to\infty$. 

Let us state the main results of this paper. They are valid both for rectangular and square details, therefore, in what follows, as a rule, we do not consider these two cases separately. We use the following denotations:\\
$S_{LRP}(t)$ is the area of the LRP at time moment~$t$;\\
$S_{norm}(t)$ is the area of all normal boxes at time moment~$t$; \\
$S_{ep}(t)$ is the area of all endpoints at time moment~$t$; \\
$S_{com}(t)$ is the total area of all the rest details at time moment~$t$ (in the case of details $R_n$, it equals~$1/t$).
Evidently, the following correlation is valid:
\begin{equation}
\label{eq0}
S_{LRP}(t)+S_{norm}(t)+S_{ep}(t)=S_{com}(t).
\end{equation}
The goal of this paper is to make an attempt to prove that for any arbitrarily small $\delta>0$ with sufficiently large~$n_0$ the following inequality is valid at all time moments~$t$, when Step~\ref{cut-stripe} has to be implemented:
\begin{equation}
\label{mainneq}
S_{LRP}(t)/S_{com}(t)>1-1/\gamma-\delta .
\end{equation}
We treat such time moments as {\it critical} and denote them as~$T_{crit}$. 

%Исправление 0 (вставка)
Inequality~\eqref{mainneq} states that $w(LRP(t)) \times h(LRP(t)) / S_{com}(t) > c > 0$ for all moments $t \in T_{crit}$. It is straightforward to show that in this case
$w(LRP(t)) = \Theta(1/\sqrt{t})$ and $h(LRP(t)) = \Theta(1/\sqrt{t})$ if $n_0$ is chosen sufficiently large. Hence, the detail~$D_t$ will successfully fit in Step~4 into the LRP since sizes of the detail are $o(1/\sqrt{t})$ and algorithm will not stop. 

This consequence of the inequality $S_{LRP}(t)/S_{com}(t) > c > 0$ is also used in other algorithms where LRP arises, therefore the study of the asymptotic behavior of the fraction $S_{LRP}(t)/S_{com}(t)$ is extremely important.
%Конец исправления 0

Note that at the time moments, when we have to cut a stripe from the LRP, the following inequality is fulfilled:
\begin{equation}
\label{wtcrit}
\max_{B\in{\mathcal B}(t)} w(B)<1/t+1/t^\gamma.
\end{equation}
For details $R_n$ inequality~\eqref{wtcrit} can be strengthened 
by replacing $1/t$ with $1/(t+1)$. However, for our subsequent analysis, it is more convenient to keep it in this universal form.

In what follows, we often use this inequality at time moments~$T_{crit}$. 

Let us consider two types of normal boxes. If a new active box chosen on Step~\ref{choose-new-active-box} represents a stripe, then we treat normal boxes obtained by cutting a row of details from the corresponding active box as \textit{normal boxes of the first kind} and denote them as~${\mathcal B}_{norm,1}$ (in Fig.~\ref{fig:6}, they are colored in dark yellow). But if some other box was chosen as an active one, then we treat normal boxes obtained by cutting a row of details from such an active box as \textit{normal boxes of the second kind} and denote them as~${\mathcal B}_{norm,2}$ (in Fig.~\ref{fig:6}, they are colored in light yellow). 

In the next section, we prove one important property of shapes of normal boxes, namely, 
{\it
For all $B\in{\mathcal B}_{norm,1}$, with any arbitrarily small $\sigma>0$ and sufficiently large~$n_0$ (where $n_0$ depends on $\sigma$), the following inequality is valid:
\begin{equation}
\label{neq}
h(B)^\gamma (1+\sigma) \geq w(B).
\end{equation}
}

According to experimental results, with sufficiently large~$n_0$ this correlation is valid for all boxes $B\in{\mathcal B}_{norm}$, not only for boxes of the first kind $B_{norm,1}$. Strictly speaking, we state the following assumption.
\begin{assumption}
\label{prop2}
For certain values of the algorithm parameter $\gamma\in(\sqrt{3/2},3/2)$, with any arbitrarily small $\sigma>0$ and sufficiently large~$n_0$, inequality~\eqref{neq} is fulfilled for all $B\in{\mathcal B}_{norm}$.
\end{assumption}

The following lemma is the key lemma in this paper:
\begin{lemma}
\label{keylemma}
If with certain~$\gamma$ Assumption~\ref{prop2} is fulfilled, then for any $\varepsilon>0$ with sufficiently large~$n_0$ the following inequality is valid at all time moments~$t\in T_{crit}$:
\begin{equation}
\label{SLRP} 
S_{norm}(t)/S_{com}(t)<1/\gamma+\varepsilon.
\end{equation}
 \end{lemma}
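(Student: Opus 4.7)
The plan is to combine two complementary width bounds for each normal box $B_n\in\B_{norm}(t)$: the upper bound $w(B_n)\le(1+\sigma)/n^\gamma$ supplied by Assumption~\ref{prop2}, and a matching \emph{lower} bound $w(B_n)\ge 1/n^\gamma$ forced by the algorithm itself. Together with the critical-time inequality~\eqref{wtcrit} these confine the indices of the surviving normal boxes to $n>t^{1/\gamma}(1-o(1))$, and a tail estimate of $\sum 1/n^{\gamma+1}$ then produces the constant $1/\gamma$.

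For the lower bound, let $N\le n$ be the first index of the row in which $B_n$ is cut. The active box $B_{act}$ for that row was either cut as a stripe on Step~\ref{cut-stripe} (in which case $w(B_{act})=w(D_N)+1/N^\gamma$ by construction) or chosen on Step~\ref{choose-new-active-box} (in which case $w(B_{act})\ge w(D_N)+1/N^\gamma$ by the gate of that step); moreover Step~\ref{place-detail} preserves the width when we move from one endpoint to the next inside a row, so the same inequality for $w(B_{act})$ still holds at the moment $B_n$ is cut. Hence
\[
w(B_n)=w(B_{act})-w(D_n)\ge w(D_N)-w(D_n)+1/N^\gamma\ge 1/N^\gamma\ge 1/n^\gamma,
\]
using monotonicity of $w(D_\cdot)$ and $N\le n$. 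Combining with~\eqref{wtcrit} gives $1/n^\gamma<1/t+1/t^\gamma$, so every surviving normal box has index $n>n^*:=\bigl(t/(1+1/t^{\gamma-1})\bigr)^{1/\gamma}$.

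For the area sum, once $n_0$ is taken large enough that $(1+\sigma)/n^\gamma<1/n$ for all $n\ge n_0$, the height of each normal box equals $h(D_n)=1/n$, so Assumption~\ref{prop2} yields $S(B_n)\le(1+\sigma)/n^{\gamma+1}$. Summing over the surviving indices $n>n^*$ and using the standard tail estimate $\sum_{n>n^*}1/n^{\gamma+1}\le(1+o(1))/(\gamma(n^*)^\gamma)$ gives
\[
S_{norm}(t)\le\frac{(1+\sigma)(1+o(1))(1+1/t^{\gamma-1})}{\gamma\,t}.
\]
Since $S_{com}(t)=1/t$ for rectangles and $S_{com}(t)=(1+o(1))/t$ for squares, dividing yields $S_{norm}(t)/S_{com}(t)\le(1+\sigma)(1+o(1))(1+1/t^{\gamma-1})/\gamma$. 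Every critical moment satisfies $t\ge t_0\sim n_0^\gamma$ by Lemma~\ref{first}, so $1/t^{\gamma-1}$ and the $o(1)$-terms tend to zero as $n_0\to\infty$ while $\sigma$ can be made arbitrarily small via Assumption~\ref{prop2}; this produces~\eqref{SLRP} for any $\varepsilon>0$ once $n_0$ is sufficiently large.

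The argument is driven almost entirely by Assumption~\ref{prop2}, which is the only non-trivial input; everything else reduces to series manipulation. The quantitative point worth underlining is that the algorithmic lower bound $w(B_n)\ge 1/n^\gamma$ matches the upper bound $(1+\sigma)/n^\gamma$ of Assumption~\ref{prop2} up to the factor $1+\sigma$, and that inequality~\eqref{wtcrit} at critical times \emph{almost} saturates this lower bound---it is this near-saturation that is responsible for the constant being precisely $1/\gamma$ rather than anything larger.
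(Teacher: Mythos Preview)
Your argument is essentially the paper's own proof: you derive the algorithmic lower bound $w(B_n)\ge 1/n^\gamma$ (the paper's inequality~\eqref{neqInv}), combine it with~\eqref{wtcrit} to force $n\gtrsim t^{1/\gamma}$, use Assumption~\ref{prop2} to get $S(B_n)\le (1+\sigma)/n^{\gamma+1}$, and finish with the tail estimate for $\sum n^{-1-\gamma}$. One small correction: it is not true that every critical moment satisfies $t\ge t_0$ (critical moments begin already at $t=n_0$, and Lemma~\ref{first} gives an \emph{upper} bound on $t_0$), but your conclusion only needs $t\ge n_0\to\infty$, so the argument stands.
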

In view of this proposition, the proof of formula~\eqref{mainneq} is reduced to that of the infinite smallness of~$S_{ep}(t)/S_{com}(t)$ with sufficiently large~$n_0$.

Analogously two types of normal boxes, we consider two types of endpoints. If on Step~\ref{choose-new-active-box} a newly chosen active box represents a stripe or some other endpoint of the first kind, then we treat the endpoint obtained by cutting a row of details from the corresponding active box as an \textit{endpoint of the first kind} and denote it as~${\mathcal B}_{ep,1}$ (in Fig.~\ref{fig:6}, such endpoints are colored in dark green). But if the chosen active box represents a normal box or some other endpoint of the second kind, then we treat the endpoint obtained by cutting a row of details from this active box as an \textit{endpoint of the second kind} and denote it as~${\mathcal B}_{ep,2}$ (in Fig.~\ref{fig:6}, such endpoints are colored in light green). 

We can prove (without any assumption) that inequality~\eqref{mainneq} is fulfilled before time moment~$t_0$. Before this time moment we do not obtain boxes ${\mathcal B}_{ep,2}$ and we can get boxes ${\mathcal B}_{norm,2}$ only from endpoints. We can prove the lemma on the finiteness of the first stage of the algorithm, owing to the fact that the relative area of endpoints of the first kind $S_{ep,1}(t)/S_{com}(t)$ with sufficiently large~$n_0$ can be arbitrarily small. 

For completing the proof of the main result (under Assumption~\ref{prop2}), it remains to make sure that the value $S_{ep,2}(t)/S_{com}(t)$ is infinitesimal. However, we have succeeded in proving this property only under one additional assumption based on the hypothesis of a pseudorandom character of the behavior of endpoints.
Let us determine the mean value of the ratio of the box height to its width for boxes ${\mathcal B}_{ep,2}(t)$.

Assume that the width of the endpoint obtained at time moment~$n$ represents a random value $\xi_n$ uniformly distributed on the segment, whose left and right boundaries have, correspondingly, the order of the minimal and maximal possible width of an endpoint at time moment~$n$, while the order of the height of this endpoint is $1/n$.
Note that at time moment $t$ we have a set of endpoints obtained at time moments $n$, $n<t$. Assume that they obey a probability distribution law which agrees with all previously made assumptions.
In Appendix~\ref{secA1}, we prove that for such a mixture of uniformly distributed random values $\xi_n$, the asymptotics of the mean value of the expression $(1/n)/\xi_n$ is $(\gamma^2-1)\ln(t)$.
We also adduce results of numerical experiments which, for various values of $\gamma$, allow us to assume that the following ratio of the sample average to $\ln(t)$
$$
\frac{1}{\ln(t)\,|{\mathcal B}_{ep,2}(t)|}\sum_{B\in{\mathcal B}_{ep,2}(t)} \frac{h(B)}{w(B)}  
$$
tends to $\gamma^2-1$. 
We also demonstrate that values of the mixture parameters used by us agree with experimental data. 

Therefore, the following assumption is quite natural.
\begin{assumption}
\label{prop1}
For considered values of the parameter~$\gamma$ with sufficiently large~$n_0$ for all $t\in T_{crit}$, $t> t_0$, the following inequality is valid:
\begin{equation}
\label{eq_prop}
\frac{1}{|{\mathcal B}_{ep,2}(t)|}\sum_{B\in{\mathcal B}_{ep,2}(t)} \frac{h(B)}{w(B)} = O(\ln(t)).
\end{equation}
\end{assumption}

\begin{theorem}[The main theorem]
\label{maintheorem}
Let assumptions~\ref{prop2} and~\ref{prop1} be fulfilled for some fixed $\gamma$, $\gamma\in(\sqrt{3/2},3/2)$. Then (for this $\gamma$)
for any $\delta>0$, with sufficiently large~$n_0$ for all $t\in T_{crit}$ 
inequality~\eqref{mainneq} is fulfilled and, consequently, it is possible to obtain a perfect packing by using the Slack-Pack algorithm.
\end{theorem}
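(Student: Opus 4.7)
The plan is to derive the main inequality~\eqref{mainneq} from the three-term area identity~\eqref{eq0} by separately bounding the relative areas $S_{norm}(t)/S_{com}(t)$ and $S_{ep}(t)/S_{com}(t)$ from above at every critical moment, and then to observe that \eqref{mainneq} guarantees that Step~\ref{cut-stripe} never fails, so that the Slack-Pack algorithm produces a perfect packing.

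Given $\delta>0$, I would first choose a splitting $\delta=\varepsilon_1+\varepsilon_2+\varepsilon_3$ with each $\varepsilon_i>0$. Under Assumption~\ref{prop2}, Lemma~\ref{keylemma} then furnishes the bound $S_{norm}(t)/S_{com}(t)<1/\gamma+\varepsilon_1$ for all $t\in T_{crit}$ provided $n_0$ is large enough, which accounts for the dominant part of the budget in \eqref{eq0}.

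Next, I would split $S_{ep}(t)=S_{ep,1}(t)+S_{ep,2}(t)$ and treat the two kinds of endpoints separately. For first-kind endpoints, the mechanism announced in Section~2 and to be detailed in Section~4 (together with Lemma~\ref{first} on the finiteness of the first stage) gives $S_{ep,1}(t)/S_{com}(t)<\varepsilon_2$ for $n_0$ large, essentially because the total area of all stripes and of the chains of first-kind endpoints they generate is controlled by the convergent gap-sum $\sum 1/n^\gamma$ (recall $\gamma>\sqrt{3/2}>1$). For second-kind endpoints, I would combine Assumption~\ref{prop1} with \eqref{wtcrit}: at $t\in T_{crit}$ every $B\in\B_{ep,2}(t)$ satisfies $w(B)<1/t+1/t^\gamma$, since otherwise $B$ would be a valid $B_{max}$ on Step~\ref{choose-new-active-box} and no stripe would have to be cut. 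Writing $S(B)=w(B)^2\cdot(h(B)/w(B))$ and summing, the uniform width bound reduces $S_{ep,2}(t)$ to a quantity controlled by the mean $h/w$ ratio in~\eqref{eq_prop}, and combined with the crude count $|\B_{ep,2}(t)|\le t-n_0$ this would yield $S_{ep,2}(t)/S_{com}(t)<\varepsilon_3$ for sufficiently large $n_0$.

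Adding the three bounds and substituting into \eqref{eq0} produces \eqref{mainneq}. To conclude that the algorithm never halts, I would observe that the LRP always fits inside the initial sheet of side $\Theta(1/\sqrt{n_0})$, so its larger dimension never exceeds that value, and the area bound \eqref{mainneq} then forces its smaller side to be at least of order $(1-1/\gamma-\delta)\sqrt{n_0}/t$, which comfortably exceeds $h(D_t)+1/t^\gamma\sim 1/t$ for $n_0$ large and all $t\ge n_0$; hence Step~\ref{cut-stripe} always succeeds and the packing is perfect. The principal obstacle is the second-kind endpoint bound: converting the averaged inequality~\eqref{eq_prop} on $h/w$ into the area estimate $S_{ep,2}/S_{com}=o(1)$ is delicate, because the naive combination of $w(B)\le w_{max}\sim 1/t$, $|\B_{ep,2}(t)|=O(t)$, and the $O(\ln t)$ average only gives an $O(\ln t)$ ratio, so sharper information on how widths concentrate strictly below $w_{max}$ — essentially the pseudo-random content underlying Assumption~\ref{prop1} — must be brought to bear.
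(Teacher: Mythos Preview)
Your overall decomposition matches the paper's exactly: split \eqref{eq0} into the LRP term plus $S_{norm}+S_{ep,1}+S_{ep,2}$, bound $S_{norm}/S_{com}$ via Lemma~\ref{keylemma}, bound $S_{ep,1}/S_{com}$ via the material of Section~4 (specifically Lemma~\ref{lemma_Sep1}), and then deal with $S_{ep,2}$. Your final paragraph about why \eqref{mainneq} forces Step~\ref{cut-stripe} to succeed is also fine and in fact more explicit than the paper, which dismisses this as evident.

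The genuine gap is precisely the one you flag yourself at the end: your crude count $|\B_{ep,2}(t)|\le t-n_0=O(t)$, combined with $w(B)=O(1/t)$ and the $O(\ln t)$ mean from Assumption~\ref{prop1}, only yields $S_{ep,2}(t)/S_{com}(t)=O(\ln t)$, which is useless. You then say that ``sharper information on how widths concentrate strictly below $w_{\max}$'' must be brought in, but this is not how the paper closes the gap, and there is no indication that the pseudo-random model behind Assumption~\ref{prop1} is strong enough to supply such width concentration on its own.

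What the paper actually does (Lemma~\ref{finallemma}) is to improve the \emph{count}, not the width distribution. Each box $B\in\B_{ep,2}(t)$ traces back to a unique normal box $B_n$ that was at some moment $\tau\le t$ chosen as an active box; Lemma~\ref{Bntgamma} (which rests on Assumption~\ref{prop2}, via $w(B_n)\sim h(B_n)^\gamma$) gives $\tau\sim n^\gamma$, so $n=O(t^{1/\gamma})$. Hence $|\B_{ep,2}(t)|=O(t^{1/\gamma})$, not $O(t)$. Plugging this into your own identity $S_{ep,2}(t)=\sum w(B)^2\cdot h(B)/w(B)$ yields
\[
S_{ep,2}(t)=O(t^{-2})\cdot O(t^{1/\gamma})\cdot O(\ln t)=O\bigl(t^{-2+1/\gamma}\ln t\bigr),
\]
and since $\gamma>1$ this gives $S_{ep,2}(t)/S_{com}(t)=O(t^{-1+1/\gamma}\ln t)=o(1)$. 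So the missing idea is Lemma~\ref{Bntgamma}: Assumption~\ref{prop2} is used twice, once in Lemma~\ref{keylemma} for the area of normal boxes, and once here to bound how many of them can have been consumed by time~$t$.

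A minor remark: your sketch for $S_{ep,1}$ invokes the ``convergent gap-sum $\sum 1/n^\gamma$''. The paper's argument (Lemmas~\ref{lemma_Sep1init} and~\ref{lemma_Sep1}) is different and more geometric: the sum of heights of all first-kind endpoints ever produced from stripes is bounded by the half-perimeter $p$ of the sheet, and $p\to 0$ as $n_0\to\infty$.
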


\section{Lemmas on normal boxes\label{sectiont0}}\label{sec3}
The main result of this section is the proof of the key Lemma~\ref{keylemma}. To this end, we first have to prove inequality~\eqref{neq} for normal boxes of the first kind.

\begin{lemma}
\label{norm}
For any sufficiently small $\sigma>0$ the choice of sufficiently large~$n_0$ guarantees the fulfillment of inequality~\eqref{neq} for all $B\in{\mathcal B}_{norm,1}$.
\end{lemma}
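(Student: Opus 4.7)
The proof would analyse $B\in\B_{norm,1}$ as a box produced in Step~\ref{place-detail} during a row whose first detail $D_N$ was placed in a stripe: since the active box inherits the stripe's width $w(D_N)+1/N^\gamma$ throughout the row, the sides of $B$ are $h(D_n)=1/n$ and $w(D_N)-w(D_n)+1/N^\gamma$. For $n=N$ this is the $(1/N)\times(1/N^\gamma)$ rectangle, for which \eqref{neq} holds with equality even for $\sigma=0$; so the plan is to handle $n>N$.

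The main step is to bound the row length. The row continues through index $n$ only if $\sum_{k=N}^n 1/k + 1/N^\gamma \leq w(LRP)$, where $w(LRP)$ is evaluated at the stripe cut. Combining $\sum_{k=N}^n 1/k \geq \ln((n+1)/N)$ with $e^x-1\leq 2x$ for small $x$ gives $n-N \leq 2N\cdot w(LRP)$. Since $w(LRP)$ is the lesser side of the LRP, one has $w(LRP)\leq\sqrt{S_{LRP}(N)}$; by \eqref{eq0}, $S_{LRP}(N)\leq S_{com}(N)=1/N$ for rectangles (and $O(1/N)$ for squares). Therefore $w(LRP)=O(1/\sqrt N)$ and, setting $\alpha:=(n-N)/N$, we get $\alpha=O(1/\sqrt N)$; in particular $\alpha\to 0$ uniformly as $n_0\to\infty$.

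Writing $n=N(1+\alpha)$ and noting that $w(D_N)-w(D_n)+1/N^\gamma=O(1/N^\gamma)\ll 1/N\sim 1/n$ (using $\gamma>1$) confirms that $h(B)=1/n$ is the greater side. A direct substitution yields
\begin{equation*}
\frac{w(B)}{h(B)^\gamma}=\alpha(1+\alpha)^{\gamma-1}N^{\gamma-1}+(1+\alpha)^\gamma.
\end{equation*}
The first summand is $O(N^{\gamma-3/2})=O(n_0^{\gamma-3/2})\to 0$ precisely because $\gamma<3/2$, and the remaining factors tend to $1$; hence for any prescribed $\sigma>0$ the ratio stays below $1+\sigma$ once $n_0$ is large enough, which is \eqref{neq}.

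The main obstacle I expect is the clean justification of the inequality $w(LRP)\leq\sqrt{S_{LRP}(N)}$: one must verify that the algorithm's symbol $w(LRP)$ really coincides with the lesser side of the LRP at the instant a stripe is cut, which needs a short book-keeping argument about the way Step~\ref{cut-stripe} updates the LRP (in particular, the tacit swap between the roles of $w(LRP)$ and $h(LRP)$ once the preserved side ceases to be the longer one). The remaining ingredients---the harmonic-sum estimate and the substitution---are elementary, and the critical hypothesis $\gamma<3/2$ is used exactly once, through the negative exponent $\gamma-3/2$.
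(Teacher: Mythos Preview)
Your argument is correct and follows essentially the same route as the paper: both proofs compute $w(B_n)\sim 1/N^\gamma + 1/N - 1/n$ with $h(B_n)=1/n$, bound the row length by $n-N=O(\sqrt{N})$ via $w(LRP(N))\le\sqrt{S_{com}(N)}=O(1/\sqrt{N})$, and then show $w(B_n)/h(B_n)^\gamma\to 1$ using the hypothesis $\gamma<3/2$. Your explicit substitution $n=N(1+\alpha)$ and the resulting formula $\alpha(1+\alpha)^{\gamma-1}N^{\gamma-1}+(1+\alpha)^\gamma$ is just a cleaner way of writing the paper's limit computation; the bookkeeping worry you flag about $w(LRP)$ is handled in the paper simply by the global convention that $w(\cdot)$ always denotes the lesser side of a rectangle, so $w(LRP)\le\sqrt{S_{LRP}}$ is immediate.
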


\begin{proof}
For details $R_n$, evidently, $S_{com}(n)=1/n$. Let us estimate the asymptotics of $S_{com}(n)$ for details $S_n$:
\begin{equation}
\label{scom}
S_{com}(n)=\sum_{t=n}^\infty 1/t^2\sim \int_{x=n}^\infty \frac{dx}{x^2}=1/n.
\end{equation}

Recall that $N$ is the index of the first detail in the row. Since $S_{LRP}(N)\leq S_{com}(N)$, we conclude that $w(LRP(N))=O(1/\sqrt{N})$. Denote the number of details in this row by $k$, $k=o(N)$. Let us estimate the sum of heights of these details:
\begin{equation}
\label{kn}
\frac{1}{N}+\frac{1}{N+1}+\ldots+\frac{1}{N+k-1}\sim \ln\left( \frac{N+k}{N} \right) \sim \frac{k}{N}.
\end{equation}
Hence we conclude that $w(LRP(N))=O(1/\sqrt{N})\sim \frac{k}{N}$, i.e., $k=O(\sqrt{N})$.

Note that having packed a row of~$k$ details, we get~$k$ new normal boxes, whose heights equal $1/N,\ldots 1/(N+k-1)$ and widths are not less than $N^{-\gamma}$. Therefore, for all $B\in{\mathcal B}_{norm}$,
\begin{equation}
\label{neqInv}
h(B)^\gamma \leq w(B).
\end{equation}

Consider the normal box $B_n$: $w(B_n)\sim\frac{1}{N^{\gamma}} + \frac{1}{N} - \frac{1}{n}$, $h(B_n)=\frac{1}{n}$; here
\begin{equation}
\label{sqrtm}
n=N+O(\sqrt{N}).
\end{equation}

In view of the constraint $\gamma<3/2$ we get the equality
$$
\lim\limits_{n\to\infty} \frac{w(B_n)}{h(B_n)^{\gamma}}=
\lim\limits_{N\to\infty}  \Bigl( \frac{1}{N}+\frac{1}{N^{\gamma}} - \frac{1}{N + O(\sqrt{N})} \Bigr)\times
\Bigl( N + O(\sqrt{N})\Bigr)^\gamma  = 1.
$$
\end{proof}

\begin{remark}
According to results of a more accurate analysis, the maximal value of $\frac{h(B_n)^\gamma}{w(B_n)}$ for $B_n\in{\mathcal B}_{norm,1}$ is attained at the end of the first row in the normal box. 
\end{remark}

We do not prove this remark, because we do not use it when proving the main propositions. Let us now prove the key Lemma~\ref{keylemma}.

\begin{proof}[Proof of Lemma~\ref{keylemma}]
Formula~\eqref{wtcrit} implies that for any $\delta>0$ with $n_0>({1}/{\delta})^{\gamma-1}$ the following inequality is valid for all $B\in{\mathcal B}(t)$, $t\in T_{crit}$: 
\begin{equation}
\label{condw}
w(B)<(1+\delta)/t.
\end{equation}
In particular, this correlation is fulfilled for $B_n\in{\mathcal B}_{norm}(t)$. Formulas~\eqref{neqInv} and~\eqref{condw} imply that for all $B_n\in{\mathcal B}_{norm}(t)$ with $t\in T_{crit}$ the index~$n$ satisfies the inequality
\begin{equation}
\label{ngamma}
n^\gamma (1+\delta) > t.
\end{equation}
Let us make use of Assumption~\ref{prop2}. In accordance with inequality~\eqref{neq} we get the correlation
\begin{eqnarray*}
S_{norm}(t)&=&\sum_{B\in{\mathcal B}_{norm}(t)} w(B)h(B) \leq (1+\sigma) \sum_{B\in{\mathcal B}_{norm}(t)} h(B)^{1+\gamma}=\\
&=& (1+\sigma)\sum_{n: B_n\in{\mathcal B}_{norm}(t)} n^{-1-\gamma},
\end{eqnarray*}
where $\sigma$ is an arbitrarily small positive value, while $n_0$ is sufficiently large.
By applying formula~\eqref{ngamma} we get the correlation
$$
\frac{S_{norm}(t)}{1+\sigma}\leq \sum_{(t/(1+\delta))^{1/\gamma}<n<t} n^{-1-\gamma}\sim\int_{(t/(1+\delta))^{1/\gamma}}^t x^{-1-\gamma}\,dx\sim \frac{1+\delta}{\gamma t},
$$
which implies the validity of inequality~\eqref{SLRP} for arbitrarily small~$\varepsilon>0$. 
\end{proof}

Note that if instead of boxes ${\mathcal B}_{norm}$ we consider those ${\mathcal B}_{norm,1}$, then we can refer to Lemma~\ref{norm} rather than to Assumption~\ref{prop2}, and then the key lemma takes the form of the following unconditional assertion.
\begin{corollary}
\label{corol1}
For any $\varepsilon>0$, with sufficiently large~$n_0$ the following inequality is valid at all time moments~$t\in T_{crit}$: 
$$
S_{norm,1}(t)/S_{com}(t)<1/\gamma+\varepsilon.
$$
\end{corollary}

\section{The rest auxiliary lemmas}\label{sec4}
The main result of this section is the proof of two important lemmas. In the first part, we estimate the percentage of endpoints of the first kind.
\begin{lemma}
\label{lemma_Sep1}
For any $\varepsilon>0$ with sufficiently large~$n_0$ the following inequality is valid at all time moments~$t\in T_{crit}$: 
$${S_{ep,1}(t)/S_{com}(t)<\varepsilon.}$$
\end{lemma}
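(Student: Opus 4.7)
I propose to prove Lemma~\ref{lemma_Sep1} by bounding the area of each first-kind endpoint in $\mathcal{B}(t)$ via the short side of the stripe that originated its chain, then summing using the elementary fact that stripe widths sum to at most the initial side of the sheet.

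Organize first-kind endpoints into \emph{chains}: one chain per stripe cut at Step~\ref{cut-stripe}, consisting of the successive $B_{ep}$'s produced at Step~\ref{place-detail} while the active box lies in that chain. Because Step~\ref{place-detail} removes the previous $B_{act}$ from $\mathcal{B}$ before adding the new $B_{ep}$, each chain contributes at most one first-kind endpoint to $\mathcal{B}(t)$. Let $w_{st}:=w(D_{N_{st}})+1/N_{st}^{\gamma}$ denote the short side of the ancestor stripe; each execution of Step~\ref{cut-stripe} decreases $h(LRP)$ by exactly $w_{st}$, so $\sum_{\text{chains}}w_{st}$ does not exceed the initial side of the sheet, $\sqrt{S_{com}(n_0)}=(1+o(1))/\sqrt{n_0}$.

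An endpoint $E\in\mathcal{B}(t)$ is created in Step~\ref{place-detail} as $u\times\ell$ with $u=w(B_{act})$ and $\ell=h(B_{act})-h(D_n)$. A short induction on the chain gives $u\le w_{st}$: at the first packing in the stripe one has $u=w_{st}$, and at any subsequent packing $u$ equals the canonical width of the previous chain-endpoint, at least one of whose sides is $\le w_{st}$ by the inductive hypothesis. The crucial step is to argue that the canonical width $\min(u,\ell)$ of $E$, which by \eqref{wtcrit} lies below $w(D_t)+1/t^{\gamma}$, is $\ell$ and not $u$; equivalently, that $u\ge w(D_t)+1/t^{\gamma}$. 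For a $B_{act}$ picked freshly at Step~\ref{choose-new-active-box} at some $t'\le t$ this is immediate from the Step~\ref{choose-new-active-box} threshold $w(B_{act})\ge w(D_{t'})+1/(t')^{\gamma}$ and monotonicity in $t$; propagating the lower bound to $B_{act}$'s that are previous in-row $B_{ep}$'s (where the $\min/\max$ convention may have flipped) requires combining the Step~\ref{check-current-active-box} inequality $h(B_{act})\ge h(D_n)+1/N^{\gamma}$ with an inductive tracking argument. Granted this, $\ell<w(D_t)+1/t^{\gamma}$ and $\operatorname{area}(E)=u\ell\le w_{st}\bigl(w(D_t)+1/t^{\gamma}\bigr)$.

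Summing over chains and using $\gamma>1$ to absorb $1/t^{\gamma}$ into $1/t$,
\[
S_{ep,1}(t)\;\le\;\bigl(w(D_t)+1/t^{\gamma}\bigr)\sum_{\text{chains}}w_{st}\;=\;O\!\left(\frac{1}{t\sqrt{n_0}}\right),
\]
whereas $S_{com}(t)=\Theta(1/t)$ by \eqref{scom}, so $S_{ep,1}(t)/S_{com}(t)=O(1/\sqrt{n_0})<\varepsilon$ once $n_0$ is large. The main technical obstacle is the propagation of the lower bound $u\ge w(D_t)+1/t^{\gamma}$ across the chain-internal flips of the $w/h$ convention; falling back on the weaker pair of bounds $u\le w_{st}$ and $\ell\le w_{st}$ (valid for the final endpoint of any row) still yields $\operatorname{area}(E)\le w_{st}^{2}$ and hence $S_{ep,1}(t)=O(1/n_{0}^{3/2})$, which at least suffices throughout the first stage where $t\le t_0\sim n_0^{\gamma}<n_0^{3/2}$.
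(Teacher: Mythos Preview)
Your strategy coincides with the paper's: organize first-kind endpoints into one chain per stripe, observe that at any $t\in T_{crit}$ each chain contributes exactly one endpoint to $\mathcal{B}_{ep,1}(t)$, bound one side of that endpoint by the short side $w_{st}$ of the ancestor stripe, bound the other side by $(1+\delta)/t$ via~\eqref{wtcrit}, and sum the $w_{st}$ using the geometry of the LRP. (Minor correction: because the LRP may flip orientation, stripes get cut along \emph{both} edges of the original square, so $\sum w_{st}$ is bounded by the half-perimeter $p=2\sqrt{S_{com}(n_0)}$, not by a single side; this is the content of the paper's Lemma~\ref{lemma_Sep1init} and changes nothing in the estimate.)

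Where you diverge from the paper is in the bookkeeping, and this is where your self-identified ``technical obstacle'' arises --- unnecessarily. You track $u=w(B_{act})$ and then agonize over whether $w(E)=\ell$ or $w(E)=u$, leaving the general case incomplete and retreating to a weaker fallback good only for $t\le t_0$. But your own fallback observation already closes the gap: for the terminal endpoint of any row in the chain you note that \emph{both} sides satisfy $u\le w_{st}$ and $\ell\le w_{st}$, hence $h(E)=\max(u,\ell)\le w_{st}$. Now simply pair this with $w(E)=\min(u,\ell)<(1+\delta)/t$ from~\eqref{wtcrit} to get $S(E)=h(E)w(E)\le w_{st}\,(1+\delta)/t$, which is precisely the bound you wanted. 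There is no need to decide which of $u,\ell$ is the canonical width, and no need for the inductive ``propagation of the lower bound'' you flag as problematic. The paper phrases this same step as ``the height of the new endpoint does not exceed the height of the initial endpoint'', which follows in one line from the fact that both sides of $B_{ep}$ are $\le h(B_{act})$. With this simplification your argument is complete and matches the paper's.
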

In the second part of this section, we consider the time moment, when the first stage of the algorithm passes into the second one. We prove that the second stage necessarily starts and estimate the corresponding time moment. To this end, we prove that inequality~\eqref{mainneq} is fulfilled at the first stage of the algorithm.
\begin{lemma}
\label{t0exist}
For any $\delta>0$ with sufficiently large~$n_0$ inequality~\eqref{mainneq} is fulfilled for all $t\leq t_0<\infty$, $t\in T_{crit}$. This means that the algorithm, at least, will not stop at the first stage.
\end{lemma}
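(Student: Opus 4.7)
The plan is to split Lemma~\ref{t0exist} into two assertions and tackle each separately: (I) the main inequality~\eqref{mainneq} at every first-stage critical moment $t\leq t_0$, and (II) the finiteness $t_0<\infty$. For (I), I would invoke equation~\eqref{eq0} to reduce the claim to the bound $(S_{ep}(t)+S_{norm}(t))/S_{com}(t)<1/\gamma+\delta$. Since before $t_0$ no normal box has yet been activated, every endpoint in $\B(t)$ is of the first kind and $S_{ep}(t)=S_{ep,1}(t)$; Lemma~\ref{lemma_Sep1} bounds this by an arbitrarily small $\varepsilon\,S_{com}(t)$. Decomposing $S_{norm}(t)=S_{norm,1}(t)+S_{norm,2}(t)$, Corollary~\ref{corol1} handles the first-kind contribution with the bound $(1/\gamma+\varepsilon)\,S_{com}(t)$.

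The crux of (I) is to control $S_{norm,2}(t)$ at the first stage without invoking Assumption~\ref{prop2}. My plan is a charging argument based on area conservation. Every $B\in\B_{norm,2}(t)$ before $t_0$ arises from the activation of some endpoint $E$ of the first kind, which in turn inherits its width $w(E)=1/(n_s+1)+1/n_s^\gamma$ from the stripe cut at some time $n_s\geq n_0$ that originated its chain. Along the entire chain descending from a given stripe, the total area of normal boxes of the second kind produced across all activations is bounded above (by area conservation applied telescopically through the sequence of sub-activations) by the area of the first endpoint generated from that stripe, which has width of order $1/n_s$ and height at most equal to the first-row leftover, itself of order $1/n_s^\gamma+1/n_{\text{end}}$. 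Summing this bound over all stripes cut by time $t\leq t_0$, the resulting total is of strictly lower order in $n_0$ than $S_{com}(t)$, so the ratio $S_{norm,2}(t)/S_{com}(t)$ tends to zero as $n_0\to\infty$. Combining the three estimates then yields~\eqref{mainneq}.

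For (II), part~(I) guarantees that $w(LRP)\geq w(D_t)+1/t^\gamma$ at every first-stage critical moment, so Step~\ref{cut-stripe} always succeeds and the algorithm does not fail. The first stage ends precisely when Step~\ref{choose-new-active-box} selects a normal box. Normal boxes of the first kind generated from the initial stripes have widths of order $1/n_0^\gamma$, which begin to satisfy the activation criterion $w\geq 1/t+1/t^\gamma$ as soon as $t$ exceeds a threshold of order $n_0^\gamma$; at the first moment at which the widest eligible box happens to be such a normal box (necessarily occurring once the older endpoints have been depleted in height through repeated activations), the second stage begins. Hence $t_0=O(n_0^\gamma)<\infty$. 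The main obstacle I anticipate is the telescopic area-conservation bound on $S_{norm,2}(t)$: it requires a careful induction over the sequence of activations inside each chain, with uniform control over the first-row leftover heights, since these can a priori be as large as $1/n_s$ rather than the more favourable $1/n_s^\gamma$.
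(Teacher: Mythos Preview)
Your decomposition into (I) and (II) and your use of Corollary~\ref{corol1}, Lemma~\ref{lemma_Sep1}, and an area-conservation charging argument for $S_{norm,2}$ are exactly what the paper does (the charging step is the content of Lemma~\ref{32}). However, there is a genuine circularity in the order you have chosen. The telescopic charging only yields an \emph{absolute} bound $S_{norm,2}(t)\leq \sum_{B\in\B'_{s-ep,1}(t)}S(B)=O(n_0^{-3/2})$, obtained by combining $\sum h(B)<p=O(n_0^{-1/2})$ with $w(B)\leq 1/n_0$. To turn this into the \emph{relative} bound $S_{norm,2}(t)/S_{com}(t)=o(1)$ you need $S_{com}(t)\sim 1/t$ to be at least of order $n_0^{-3/2}$, i.e.\ you need to know a priori that every first-stage critical time satisfies $t=o(n_0^{3/2})$. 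You assert this (``strictly lower order in $n_0$ than $S_{com}(t)$'') without proof, and the bound $t_0=O(n_0^\gamma)$ that would deliver it is deferred to part~(II), which in turn invokes part~(I). As written, neither half stands on its own.

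The paper breaks this circularity by inserting, \emph{before} any area estimate, a purely combinatorial observation: fix the last normal box $\widetilde{B}$ of the very first row, compute its width $w(\widetilde{B})\sim 1/n_0^\gamma+\Delta$ with $\Delta\sim n_0^{-3/2}$, and let $t'$ be the first moment at which $w(D_{t'})+1/(t')^\gamma\leq w(\widetilde{B})$, so $t'\sim n_0^\gamma$ (here one uses $\gamma^2>3/2$). The point is that for any $t\in T_{crit}$ with $t\leq t_0$ one automatically has $t\leq t'$: if $t>t'$ then $\widetilde{B}$ (hence $B_{max}$) is eligible at Step~\ref{choose-new-active-box} and no stripe would be cut, contradicting $t\in T_{crit}$. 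This a priori bound $t\leq t'\sim n_0^\gamma$ requires no knowledge of whether the algorithm fails and no area input; once it is in hand, $\gamma<3/2$ gives $n_0^{-3/2}\cdot t=o(1)$ and your part~(I) goes through. So your ingredients are right, but the order must be: (a)~establish $t\leq t'$ via $\widetilde{B}$; (b)~run the area estimates to get~\eqref{mainneq}; (c)~conclude non-failure during stage~1.
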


\subsection{Lemmas on the ratio of endpoints of the first kind}
Let the symbol ${\mathcal B}'(t)$ stand for the set of all endpoints obtained before time moment~$t$, i.e., ${\mathcal B}'(t)=\bigcup\limits_{n=n_0}^t{\mathcal B}(n)$.

Recall that endpoints of the first kind can be obtained by packing a row of details in the active box, which represents a stripe or some other endpoint of the first kind. Denote by ${\mathcal B}_{s.ep,1}(t)$ the set of all endpoints of the first kind obtained in a stripe up to time moment~$t$. Then ${\mathcal B}'_{s.ep,1}(t)=\bigcup\limits_{n=n_0}^t{\mathcal B}_{s.ep,1}(n)$.

Before proving Lemma~\ref{lemma_Sep1} let us consider the following geometric lemma, which is important for further reasoning.
\begin{lemma}
The following inequality is valid:
\label{lemma_Sep1init}
$$\sum_{B\in {\mathcal B}'_{s.ep,1}(t)} h(B)<p,\quad \mbox{where $p$ is the half-perimeter of the initial sheet.}$$
\end{lemma}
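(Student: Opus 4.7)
My plan is purely geometric: I would put each $B \in \B'_{s-ep,1}(t)$ in bijection with the unique stripe out of which its row was packed, and then pin $h(B)$ to that stripe's thickness, so the claim follows from the fact that the stripe thicknesses telescope against $h(LRP)$. A row that begins at index $N$ in a freshly cut stripe packs some $k \geq 1$ consecutive details, and the first-kind endpoint that survives in $\mathcal B$ at the end of that row has dimensions $(w(D_N)+1/N^\gamma)\times(w(LRP)-S_k)$, where $S_k:=\sum_{i=N}^{N+k-1}1/i$ and $w(LRP)$ is taken at the moment the stripe was cut.

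Next I would argue that the stripe thickness $w(D_N)+1/N^\gamma$ is always the larger of these two sides. The row terminates exactly when Step~\ref{check-current-active-box} fails, so $w(LRP)-S_k < h(D_{N+k})+1/N^\gamma = 1/(N+k)+1/N^\gamma$. Since $k\geq 1$ forces $1/(N+k)\leq w(D_N)$ in both the square ($w(D_N)=1/N$) and rectangle ($w(D_N)=1/(N+1)$) cases, we obtain $w(LRP)-S_k < w(D_N)+1/N^\gamma$, whence $h(B)=w(D_N)+1/N^\gamma$. Summing over all stripes cut by time $t$ now telescopes: each execution of Step~\ref{cut-stripe} decreases $h(LRP)$ by exactly $w(D_N)+1/N^\gamma$, so $\sum_i(w(D_{N_i})+1/N_i^\gamma)=h(LRP)(n_0)-h(LRP)(t)\leq h(LRP)(n_0)$. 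Because the initial sheet is a square of side $h(LRP)(n_0)$, its half-perimeter is $p=2h(LRP)(n_0)$, and therefore $\sum_{B\in\B'_{s-ep,1}(t)} h(B) \leq h(LRP)(n_0) = p/2 < p$.

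The main obstacle I anticipate is correctly reading off the membership of $\B'_{s-ep,1}(t)$: the union-over-time in the definition is easy to misinterpret, and one must verify that it records exactly one surviving terminal endpoint per stripe-started row (so the set is in bijection with the stripes cut so far). Once this identification is established, the rest reduces to the two observations above. A secondary concern is the case in which at time $t$ a row is still in progress; in principle its current active endpoint might be counted, but its $h$ is bounded by $h(LRP)(n_0)=p/2$, and the strict telescoping bound (strict as soon as any detail has been packed inside the current stripe) leaves room to keep the total strictly below $p$.
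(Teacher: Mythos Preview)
Your identification of each box in $\B'_{s-ep,1}(t)$ with a unique stripe and the equality $h(B)=w(D_N)+1/N^\gamma$ (the stripe's thickness) are both correct and coincide with the paper's approach. The gap is in your telescoping step. You assert that each execution of Step~\ref{cut-stripe} decreases $h(LRP)$ by exactly $\tau:=w(D_N)+1/N^\gamma$, giving $\sum_i\tau_i=h(LRP)(n_0)-h(LRP)(t)\le p/2$. But Step~\ref{cut-stripe} produces a rectangle with sides $w(LRP)$ and $h(LRP)-\tau$, and since $h$ always denotes the \emph{longer} side, the new $h(LRP)$ is $\max\bigl(w(LRP),\,h(LRP)-\tau\bigr)$. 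Whenever $h(LRP)-\tau<w(LRP)$ the labels swap and $h(LRP)$ drops by only $h(LRP)-w(LRP)<\tau$. This already happens at the very first cut from the initial square: the LRP passes from $s\times s$ to $s\times(s-\tau_1)$, so $h(LRP)$ stays equal to $s$. Iterating, one sees that $\sum_i\tau_i$ can be as large as nearly $2s=p$, not $p/2$.

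The correct invariant---the one the paper uses---is the half-perimeter of the LRP, which decreases by exactly $\tau$ at every cut regardless of relabelling. Hence
\[
\sum_{B\in\B'_{s-ep,1}(t)} h(B)=\sum_i\tau_i \;=\; p-\bigl(h(LRP(t))+w(LRP(t))\bigr)\;<\;p,
\]
which is precisely the stated bound. The paper phrases this geometrically: stripes are cut alternately ``along the width'' and ``along the height'' of the initial sheet, and the two partial sums $h_1,h_2$ of stripe thicknesses satisfy $h_1+h_2=p-w(LRP(t))-h(LRP(t))$. Once you replace your single-side telescoping by this half-perimeter telescoping, your argument becomes exactly the paper's.
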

\begin{proof}
One may cut out stripes from the LRP either ``along the width'' or the height of the initial sheet. Having packed a row of details, we get an endpoint in each stripe. We can easily make sure that the sum of heights of endpoints under consideration equals the sum of widths of all stripes cut off by time moment~$t$. Let the symbol $h_1$ stand for the sum of heights of endpoints obtained from stripes that were cut out ``along the width'' of the sheet, and let $h_2$ do for those that were cut out ``along its height'' (see~ Fig.~\ref{fig:7}). Hence, 
$$
\sum_{B\in {\mathcal B}'_{s.ep,1}(t)} h(B)=h_1+h_2=p- w(LRP(t))-h(LRP(t)),
$$
here $w(LRP(t))+h(LRP(t))$ is the half-perimeter of the LRP at time moment $t$. This, evidently, implies the desired lemma.
\end{proof}

\begin{figure}[h]
    \centering
    \includegraphics[width=0.5\textwidth]{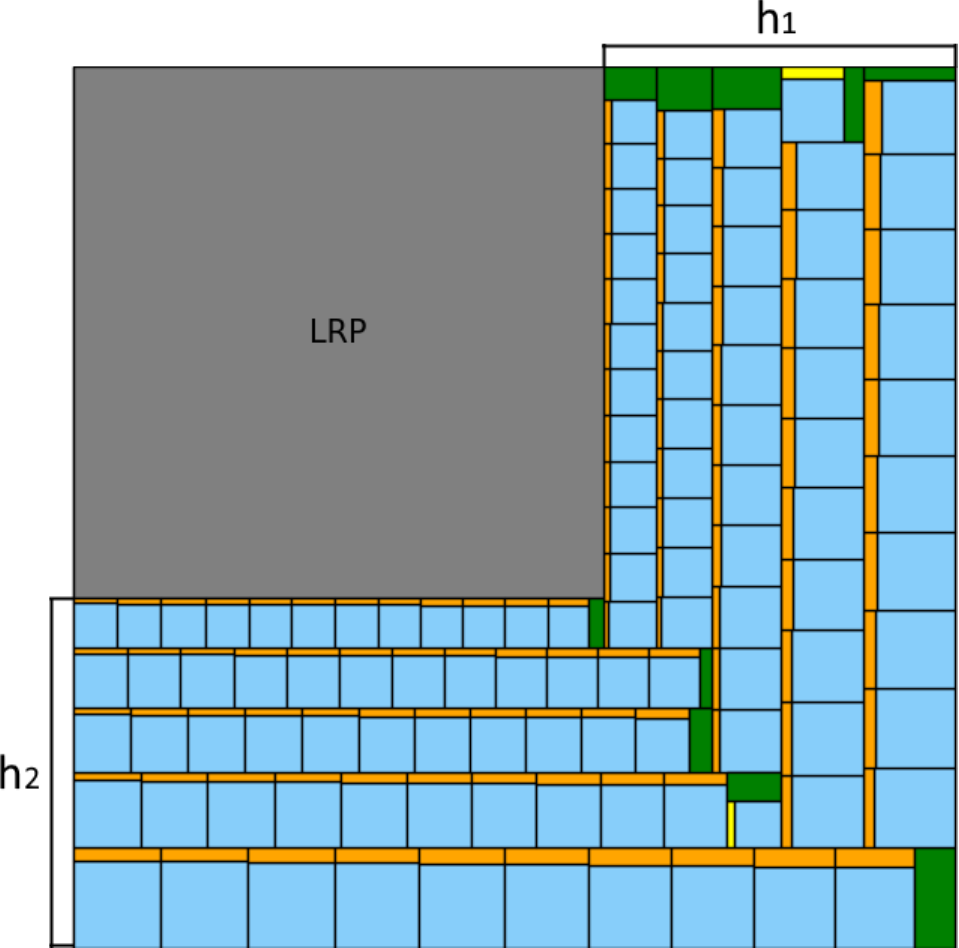}
    \caption{Illustration of Lemma~\ref{lemma_Sep1init} and of Lemma~\ref{lemma_Sep1}.}
    \label{fig:7}
\end{figure}

\begin{proof}[Proof of Lemma~\ref{lemma_Sep1}]
If the active box represents an endpoint of the first kind, then by packing a row of details in it we get a new endpoint of the first kind. Evidently, its height does not exceed the height of the initial endpoint (see~ Fig.~\ref{fig:7}). By Lemma~\ref{lemma_Sep1init} we conclude that
$$\sum_{B\in {\mathcal B}_{ep,1}(t)} h(B)\leq \sum_{B\in {\mathcal B}'_{s.ep,1}(t)} h(B)<p,$$
whence
$$
S_{ep,1}(t)=\sum_{B\in {\mathcal B}_{ep,1}(t)} w(B)h(B)< p \max_{B\in{\mathcal B}_{ep,1}(t)} w(B).
$$
According to formula~\eqref{condw}, $\max_{B\in{\mathcal B}_{ep,1}(t)}w(B)<(1+\delta)/t$ and according to~\eqref{scom}, $S_{com}(t)\sim 1/t$. With sufficiently large~$n_0$ the half-perimeter $p$ of the sheet can be arbitrarily small, which means that the lemma is valid.
\end{proof}

\subsection{Lemmas on the starting time moment of the second stage of the algorithm}
For proving Lemma~\ref{t0exist} we need one more corollary of Lemma~\ref{lemma_Sep1init}.
\begin{lemma}
\label{32}
With $n_0\to\infty$ at all time moments~$t\leq t_0$, 
$S_{norm,2}(t)+S_{ep,1}(t)=O(n_0^{-3/2})$. 
\end{lemma}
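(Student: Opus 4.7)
The plan is to show that, during the first stage, the combined area $S_{ep,1}(t)+S_{norm,2}(t)$ is controlled by a single quantity counting the ``first-row endpoints'' of the stripes cut so far, and then to bound this quantity via Lemma~\ref{lemma_Sep1init} together with the elementary inequality $N^{-2}\le n_0^{-1}N^{-1}$.

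First, I would exploit the structural restriction of the first stage: before time~$t_0$, no normal box has ever been chosen as an active box, so the active box of every consumption event is either a stripe or an endpoint of the first kind. In particular, no endpoint of the second kind has yet appeared and $\B_{ep}(t)=\B_{ep,1}(t)$. Let $A_1(t)$ denote the total area of the final endpoints of the first kind produced by packing the first row in each stripe cut by time~$t$, and let $D^{ep}(t)$ be the total area of details packed in endpoint-active-boxes. Each stripe contributes exactly one endpoint to $\B_{ep,1}$; each subsequent consumption event replaces an endpoint $B\in\B_{ep,1}$ of area $a$ by a new endpoint of $\B_{ep,1}$ of area $a'$, normal boxes of the second kind of total area $s$, and details of total area $d$, with $a=a'+s+d$. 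Telescoping over all such events gives the conservation identity
\[
A_1(t)\;=\;S_{ep,1}(t)+S_{norm,2}(t)+D^{ep}(t)\;\ge\;S_{ep,1}(t)+S_{norm,2}(t),
\]
so it suffices to show $A_1(t)=O(n_0^{-3/2})$.

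Next, I would bound each first-row endpoint geometrically. A stripe started at index~$N$ has its short side equal to $w(D_N)+N^{-\gamma}\le 2/N$ (using $\gamma>1$), and its first row terminates when Step~\ref{check-current-active-box} fails, which leaves a remainder of length at most $h(D_n)+N^{-\gamma}\le 2/N$ for the row-ending index $n\ge N$. Hence the final endpoint fits in a square of side $2/N$, so its area is at most $4/N^2$ and $A_1(t)\le 4\sum_{\text{stripes}}N^{-2}$. Finally I would invoke Lemma~\ref{lemma_Sep1init}, which asserts that the sum of $w(D_N)+N^{-\gamma}$ over all stripes cut by time~$t$ is less than the half-perimeter $p=O(n_0^{-1/2})$ of the initial sheet. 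Since each summand exceeds $1/(2N)$, this yields $\sum_{\text{stripes}}N^{-1}=O(n_0^{-1/2})$, and combining with $N^{-2}\le n_0^{-1}N^{-1}$,
\[
A_1(t)\;\le\;\frac{4}{n_0}\sum_{\text{stripes}}\frac{1}{N}\;=\;O(n_0^{-3/2}).
\]

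The main obstacle will be the bookkeeping underlying the conservation identity: one must verify that no mechanism besides stripe cuts can deposit area into $\B_{ep,1}\cup\B_{norm,2}$ before~$t_0$. This rests on the observation that, at the first stage, neither normal boxes nor second-kind endpoints can be selected as active boxes, so the only ways a new endpoint of the first kind is born are (a) as the remainder of a first row in a freshly cut stripe, or (b) as the remainder of a row packed in an already existing first-kind endpoint—and the second mechanism does not inject any area from outside the existing $\B_{ep,1}$.
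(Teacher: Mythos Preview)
Your proof is correct and follows essentially the same route as the paper: both arguments first reduce $S_{ep,1}(t)+S_{norm,2}(t)$ to the total area of the ``first-row'' endpoints coming from stripes (your conservation/telescoping identity is exactly the inequality $S_{norm,2}(t)+S_{ep,1}(t)\le\sum_{B\in\B'_{s\text{-}ep,1}(t)}S(B)$ in the paper), and then bound this total area using Lemma~\ref{lemma_Sep1init}. The only cosmetic difference is that the paper bounds each such endpoint crudely by $w(B)\le 1/n_0$ and then sums the heights via Lemma~\ref{lemma_Sep1init}, whereas you first bound each endpoint's area by $4/N^2$ and then use $N^{-2}\le n_0^{-1}N^{-1}$ together with the same lemma---these are equivalent bookkeeping choices leading to the same $O(n_0^{-3/2})$.
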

\begin{proof}
Choosing an endpoint of the first kind as the active box and then packing a row of details in it, we get a new endpoint of the first kind and normal boxes of the second kind.\footnote{For example, in Fig.~\ref{fig:6}, by packing the row of details $D_{113}-D_{114}$ in the endpoint above detail $D_{50}$ we get a new endpoint of the first kind and 2 normal boxes of the second kind.} Evidently, the total area of new boxes is less than the area of the initial endpoint. Since with~$t\leq t_0$ normal boxes of the second kind can be obtained only by packing details in endpoints of the first kind, we conclude that
$$S_{norm,2}(t)+S_{ep,1}(t)\leq \sum_{B\in {\mathcal B}'_{s.ep,1}(t)} S(B).$$
By Lemma~\ref{lemma_Sep1init},
$$
\sum_{B\in {\mathcal B}'_{s.ep,1}(t)} h(B)<p=2 \sqrt{S_{com}(n_0)}=O(n_0^{-1/2}).
$$
For any~$B$, evidently, $w(B)\leq n_0^{-1}$. Consequently,
$$
S_{norm,2}(t)+S_{ep,1}(t)=\sum_{B\in {\mathcal B}'_{s.ep,1}(t)} h(B) w(B)=O(n_0^{-3/2}).
$$ 
\end{proof}

\begin{proof}[Proof of Lemma~\ref{t0exist}]
Consider the first stripe cut out from the LRP.\footnote{In Fig.~\ref{fig:6}, this is the stripe, where the row of details $D_{34}-D_{39}$ is packed.} Let the symbol~$k_0$ stand for the \textit{quantity of details in this stripe} and let~${\widetilde B}$ do for the normal box obtained by packing the last detail in this row.\footnote{In Fig.~\ref{fig:6}, this is the normal box located above detail $D_{39}$.} Then the index of this normal box is $n_0+k_0-1$. Evidently, the second stage will start not later the time moment, when the row of details is packed in~${\widetilde B}$.

Let us prove that $k_0\geq \lfloor{\sqrt{n_0}}\rfloor-1$. Let the symbol~$a$ stand for the sum of heights of details from $D_{n_0}$ to $D_{n_0+\lfloor{\sqrt{n_0}}\rfloor-2}$. Since
$$
a=\frac{1}{n_0}+\ldots+\frac{1}{n_0+\lfloor{\sqrt{n_0}}\rfloor-2}<\frac{\lfloor{\sqrt{n_0}}\rfloor-1}{n_0},
$$
we conclude that $a+n_0^{-\gamma}<1/\sqrt{n_0}\leq \sqrt{S_{com}(n_0)}$, i.e., $\lfloor{\sqrt{n_0}}\rfloor-1$ first details are packed in the first stripe with the necessary gap for the endpoint.

Let the symbol $b$ stand for the sum of heights of details in the first stripe, i.e., those from $D_{n_0}$ to $D_{n_0+k_0-1}$. Formula~\eqref{kn} implies the correlation
$$
b=\frac{1}{n_0}+\ldots+\frac{1}{n_0+k_0-1}\sim \frac{k_0}{n_0},\quad \mbox{while $b<\sqrt{S_{com}(n_0)}\sim 1/\sqrt{n_0}$.}
$$

Therefore, $k_0\sim \sqrt{n_0}$ and the difference of widths~$\Delta$ of the first detail in the stripe and the last one is $\Delta\sim 1/n_0-1/(n_0+k_0)\sim n_0^{-3/2}$.
Let the symbol $t'$ stand for the first time moment, when the next detail can be packed in the box ${\widetilde B}$ with the necessary gap, whose order of magnitude is~$ (t')^{-\gamma}$. Then $1/t'+(t')^{-\gamma}\geq (1/n_0)^\gamma+\Delta$, and since $\gamma^2>3/2$, we conclude that $t'\sim n_0^{\gamma}$.

Let us prove that the algorithm does not stop earlier the starting moment of the second stage (it begins at the time moment, when ${\widetilde B}$ or some other normal box is chosen as the active box). Evidently, for any $t\in T_{crit}$, at the first stage, $t\leq t_0$, the next detail cannot be packed in ${\widetilde B}$, i.e., $t\leq t'$.

Since ${\mathcal B}_{ep,2}(t)=\emptyset$ with $t\leq t_0$, equation~\eqref{eq0} takes the form
$$
S_{LRP}(t)+S_{norm,1}(t)+S_{norm,2}(t)+S_{ep,1}(t)=S_{com}(t).
$$
According to formula~\eqref{scom}, $S_{com}(t)\sim 1/t$ and by condition, $\gamma<3/2$. Therefore, with $t=O(n_0^\gamma)$ in view of Lemma~\ref{32} we conclude that
$$(S_{norm,2}(t)+S_{ep,1}(t))/S_{com}(t)=o(1),\quad  \mbox{as $n_0\to\infty$}.
$$
Taking into account Corollary~\ref{corol1}, we conclude that with sufficiently large~$n_0$ inequality~\eqref{mainneq} is valid for all $t\in T_{crit}$, $t\leq t_0$.
\end{proof}

Let us now estimate the time moment~$t_0$, when the second stage starts.
\begin{lemma}
\label{first}
With any sufficiently large~$n_0$, $t_0< (1+\varepsilon) n_0^\gamma$, for any $\varepsilon>0$.
\end{lemma}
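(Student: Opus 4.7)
The plan is to leverage the setup already established within the proof of Lemma~\ref{t0exist}, where we identified the distinguished normal box $\widetilde{B}$ produced by the last detail of the first row in the first stripe and introduced the auxiliary moment $t'$, the earliest index at which a detail can be packed in $\widetilde{B}$ with the necessary gap. That earlier proof already delivers the inequality $t_0\le t'$ together with the coarse asymptotic $t'\sim n_0^{\gamma}$, so my task reduces to sharpening the asymptotic into the strict upper bound $t'<(1+\varepsilon)n_0^{\gamma}$ for any $\varepsilon>0$ and all sufficiently large $n_0$.

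First I would record the already-computed width $w(\widetilde{B})=n_0^{-\gamma}+\Delta$ with $\Delta=O(n_0^{-3/2})$, and then invoke the hypothesis $\gamma<3/2$ to conclude $n_0^{-3/2}=o(n_0^{-\gamma})$ and hence $w(\widetilde{B})=(1+o(1))n_0^{-\gamma}$. Since $t'$ is defined by the threshold relation $w(D_{t'})+(t')^{-\gamma}\le w(\widetilde{B})$, to bound $t'$ from above it is enough to exhibit a single concrete trial value $t_\ast$ for which this inequality already holds.

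Next I would test $t_\ast=(1+\varepsilon)n_0^{\gamma}$. The main-order contribution on the left is $w(D_{t_\ast})\sim 1/t_\ast=(1+\varepsilon)^{-1}n_0^{-\gamma}$, while the gap term $t_\ast^{-\gamma}=O(n_0^{-\gamma^{2}})$ is negligible compared with $n_0^{-\gamma}$ because $\gamma>1$. Consequently the left-hand side equals $\bigl((1+\varepsilon)^{-1}+o(1)\bigr)n_0^{-\gamma}$, which is strictly less than $(1+o(1))n_0^{-\gamma}=w(\widetilde{B})$ for every fixed $\varepsilon>0$ once $n_0$ is large enough. This forces $t'\le t_\ast$, and combined with $t_0\le t'$ it yields $t_0<(1+\varepsilon)n_0^{\gamma}$, as required.

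The calculation itself is a routine asymptotic comparison whose only delicate point is correctly identifying which error term dominates: the hypothesis $\gamma<3/2$ is used to kill $\Delta$ against the main part of $w(\widetilde{B})$, while $\gamma>1$ is used to kill the new gap $t_\ast^{-\gamma}$ against $1/t_\ast$. The one conceptually non-trivial ingredient is the inherited inequality $t_0\le t'$, which has already been argued inside the proof of Lemma~\ref{t0exist} via the observation that while the first stage is in progress the next detail genuinely cannot fit into $\widetilde{B}$ at all; that remains the structural heart of the statement, and the present lemma merely quantifies its timing.
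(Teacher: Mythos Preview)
Your argument rests on the claim that the proof of Lemma~\ref{t0exist} already delivers the inequality $t_0\le t'$, and this is where the gap lies. What that proof actually establishes is that every \emph{critical} moment $t\in T_{crit}$ with $t\le t_0$ satisfies $t\le t'$; it says nothing about $t_0$ itself, which is not in general a critical moment. The time $t'$ is merely the first index at which the detail $D_{t'}$ \emph{could} fit into $\widetilde B$ with the prescribed gap. Whether the algorithm actually selects $\widetilde B$ (or any normal box) at that moment depends on Step~\ref{choose-new-active-box}: the algorithm picks the box of maximal width, and it only does so when the current row is exhausted. Thus $t_0$ may exceed $t'$ for two reasons: (i) the algorithm may be midway through a row at time $t'$, contributing a delay of order $O(\sqrt{t'})$; (ii) there may be endpoints of width larger than $w(\widetilde B)$ which get chosen ahead of $\widetilde B$, postponing the first use of a normal box.

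The paper's proof is devoted precisely to controlling these two sources of overshoot. Reason (i) is harmless because $O(\sqrt{t'})=o(t')$. Reason (ii) is the substantive point: if details with indices from $t'$ up to $Ct'$ were all absorbed by endpoints, those endpoints would have combined area at least $\sim(1-1/C)/t'\sim(1-1/C)n_0^{-\gamma}$, which for $\gamma<3/2$ is incompatible with the bound $S_{norm,2}(t)+S_{ep,1}(t)=O(n_0^{-3/2})$ of Lemma~\ref{32}. Your asymptotic computation showing $t'<(1+\varepsilon)n_0^{\gamma}$ is fine, but it only reproduces the relation $t'\sim n_0^{\gamma}$ already obtained in Lemma~\ref{t0exist}; the missing piece is the area argument via Lemma~\ref{32} that rules out a long run of endpoint-filling between $t'$ and $t_0$.
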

\begin{proof}
The starting time moment of the second stage $t_0$ can be somewhat larger than~$t'$ for two reasons. First, there might be a delay of order $O(\sqrt{t'})$ caused by the cut-off stripe still being filled with details. The second possible reason is the existence of a large number of endpoints, whose width exceeds~$w({\widetilde B})$.
Since the order of magnitude of the value added due to the first reason is only $O(\sqrt{t'})$, which is asymptotically neglectable in comparison with $t'$, it remains to consider only the second reason.

Assume that we can pack in endpoints only details with indices ranging from $t'$ to $C t'$, where $C>1$ is some fixed number independent of $t'$. 
But then the total area of these boxes is asymptotically not less than
$$
\int_{t'}^{Ct'} dx/x^2=(1-1/C)/t'.
$$
Since $t'\sim n_0^\gamma$, this inequality contradicts Lemma~\ref{32}.
\end{proof}

\begin{remark}
Let Assumption~\ref{prop2} be fulfilled. Then we can reformulate the proposition about the value~$t_0$ in Lemma~\ref{first} as $t_0\sim n_0^\gamma$.
\end{remark}
Really, inequality~\eqref{neq} is equivalent to the fact that the box~$B_n$ satisfies the inequality $(1+\sigma)/n^\gamma > w(B_n)$, therefore the width of any normal box is less than $(1+\sigma)/n_0^\gamma$, whence we conclude that $t_0>n_0^\gamma/(1+\sigma)$.
Two-sided constraints imply the correlation $t_0\sim n_0^\gamma$.

\section{Proof of the main theorem}\label{sec5}
\begin{lemma}
\label{Bntgamma}
Assume that details $D_t,D_{t+1},\ldots$ are packed in the normal box $B_n$ and Assumption~\ref{prop2} is fulfilled.
Then $t\sim n^\gamma$.
\end{lemma}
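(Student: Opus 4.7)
The plan is to sandwich $t$ between matching asymptotic bounds derived from the Step~\ref{choose-new-active-box} eligibility inequality combined with the sharp two-sided width estimate on $B_n$ supplied by Assumption~\ref{prop2}. Inequality~\eqref{neqInv} from the proof of Lemma~\ref{norm} gives $w(B_n)\geq 1/n^\gamma$, while Assumption~\ref{prop2} supplies $w(B_n)\leq (1+\sigma)/n^\gamma$ for arbitrary $\sigma>0$ once $n_0$ is large enough. Since $D_t$ is by hypothesis the first detail packed in~$B_n$, the algorithm selected $B_n$ as the active box at Step~\ref{choose-new-active-box} at time~$t$, so the inequality
\[
\frac{1}{t+1}+\frac{1}{t^\gamma}\ =\ w(D_t)+\frac{1}{t^\gamma}\ \leq\ w(B_n)
\]
must hold.

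The lower bound on $t$ drops out immediately: chaining $1/(t+1)\leq w(B_n)\leq (1+\sigma)/n^\gamma$ yields $t+1\geq n^\gamma/(1+\sigma)$, so $\liminf_{n_0\to\infty} t/n^\gamma\geq 1/(1+\sigma)$ and, since $\sigma$ is arbitrary, $\liminf t/n^\gamma\geq 1$.

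For the upper bound I would proceed by strong induction on~$n$, assuming the lemma already established for every normal box $B_{m'}$ with $m'<n$. Introduce the eligibility threshold $s^*$ of~$B_n$, namely the least~$s$ with $w(B_n)\geq 1/(s+1)+1/s^\gamma$; using $w(B_n)\geq 1/n^\gamma$ together with $\gamma^2>\gamma$ (so that the $1/s^\gamma$ term is negligible relative to $1/(s+1)$ when $s\sim n^\gamma$) one gets $s^*\leq n^\gamma(1+o(1))$. The inductive hypothesis forces every normal box $B_{m'}$ with $m'<n/(1+\sigma)^{1/\gamma}$ to have been selected at its own time $t_{m'}\sim (m')^\gamma<s^*$, so on the interval $[s^*,t)$ every Step~\ref{choose-new-active-box} invocation picks some other box $B'\in\B$ with $w(B')>w(B_n)\geq 1/n^\gamma$. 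Mimicking the closing area comparison of Lemma~\ref{first}, if one had $t\geq (1+\varepsilon)n^\gamma$, then the $t-s^*$ details packed during $(s^*,t)$ would all be absorbed into such "blocker" rows, and a lower bound of order $1/s^*-1/t\sim \varepsilon/n^\gamma$ on the total area they occupy would contradict the area budget available to boxes of width exceeding $w(B_n)$, forced to be $o(1/n^\gamma)$ by the analogue of Lemma~\ref{32} extended to the second stage together with Lemma~\ref{lemma_Sep1}. This would give $t\leq (1+o(1))n^\gamma$, closing the sandwich.

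The main obstacle I expect is executing that last step cleanly for endpoints of the second kind: their widths are inherited from earlier active boxes and can therefore exceed $w(B_n)$ even for very late times, so a uniform control on the cumulative area of the subfamily of such endpoints with width $\geq 1/n^\gamma$ is what drives the contradiction. I expect to invoke Lemma~\ref{keylemma} (or, if it is not quite enough, Assumption~\ref{prop1}), since those are the tools the paper already uses to tame boxes of the second kind.
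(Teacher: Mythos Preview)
Your lower bound is fine and matches the paper. The upper bound, however, is where you miss the key observation that makes the paper's proof a two-line argument. When $D_t$ is packed as the first detail in the active box $B_n$, Step~\ref{place-detail} produces a new \emph{normal} box $B_t$ with
\[
w(B_t)=w(B_n)-w(D_t),\qquad h(B_t)=1/t.
\]
Assumption~\ref{prop2} applies not only to $B_n$ but to $B_t$ as well, yielding $w(B_t)\sim h(B_t)^\gamma=1/t^\gamma=o(1/t)$. Combined with $w(D_t)\sim 1/t$ this gives $w(B_n)\sim 1/t$; together with $w(B_n)\sim h(B_n)^\gamma=1/n^\gamma$ (Assumption~\ref{prop2} on $B_n$) you obtain $t\sim n^\gamma$ immediately. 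No induction, no area comparison, no reference to endpoints is needed.

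Your proposed route for the upper bound has a genuine gap beyond being over-engineered. The contradiction you sketch requires that the total area of ``blocker'' boxes of width at least $w(B_n)$ present during $[s^*,t)$ be $o(1/n^\gamma)$. For normal boxes your induction can perhaps arrange this, but for endpoints of the second kind you appeal to ``the analogue of Lemma~\ref{32} extended to the second stage'' and possibly Assumption~\ref{prop1}. The only place in the paper where $S_{ep,2}$ is controlled after $t_0$ is Lemma~\ref{finallemma}, whose proof \emph{uses} the present Lemma~\ref{Bntgamma} to bound $|\B_{ep,2}|$. Invoking that machinery here would be circular. So as written your plan does not close; the decomposition $w(B_n)=w(D_t)+w(B_t)$ with Assumption~\ref{prop2} applied to $B_t$ is exactly what you are missing.
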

\begin{proof} 
Note that Assumption~\ref{prop2} is equivalent to the correlation $h(B)^\gamma \sim w(B)$, for all $B\in B_{norm}$. 
Really, according to the algorithm description, the inequality $w(B) \geq h(B)^\gamma$ is valid for all $B\in B_{norm}$. The inverse asymptotic nonstrict inequality is defined by Assumption~\ref{prop2}.
Hence we get the correlation 
$$w(B_t)\sim h(B_t)^\gamma\sim 1/t^\gamma=o(1/t).$$
In addition, $w(D_t)\sim 1/t$, while $w(D_t)+w(B_t)=w(B_n)$. Consequently,
$$1/t\sim w(D_t)\sim w(B_n) \sim h(B_n)^\gamma\sim 1/n^\gamma,$$
i.e., $t\sim n^\gamma$.
\end{proof}
\begin{lemma}
\label{finallemma}
Let assumptions~1 and~2 be fulfilled. Then for all $t\in T_{crit}$, $t>t_0$, $S_{ep,2}(t)=O(t^{-2+1/\gamma}\ln(t))$ as $n_0\to\infty$.
\end{lemma}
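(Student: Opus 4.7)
My plan is to bound $S_{ep,2}(t)$ via the identity $w(B)\,h(B)=w(B)^{2}\cdot\bigl(h(B)/w(B)\bigr)$, to control the widths uniformly through the $T_{crit}$--bound \eqref{wtcrit}, to use Assumption~\ref{prop1} to handle the $h/w$ factors on average, and finally to bound $|\B_{ep,2}(t)|$ via Lemma~\ref{Bntgamma}. Concretely, \eqref{condw} gives $w(B)<(1+\delta)/t$ for every $B\in\B(t)$ at $t\in T_{crit}$, hence
$$
S_{ep,2}(t)\;\le\;\Bigl(\tfrac{1+\delta}{t}\Bigr)^{2}\sum_{B\in\B_{ep,2}(t)}\frac{h(B)}{w(B)},
$$
and Assumption~\ref{prop1} yields $\sum_{B\in\B_{ep,2}(t)}h(B)/w(B)=O\bigl(|\B_{ep,2}(t)|\,\ln t\bigr)$. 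Thus the lemma reduces to proving $|\B_{ep,2}(t)|=O\bigl(t^{1/\gamma}\bigr)$.

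The counting step relies on the fact that $\B_{ep,2}$ decomposes naturally into ``chains.'' Each second-kind row produces exactly one endpoint; if that row was packed into an already existing second-kind endpoint, the parent endpoint is removed on Step~\ref{place-detail} and the new one continues the chain, while if the row was packed into a normal box a fresh chain is started. Since each normal box can serve as an active box at most once (Step~\ref{place-detail} removes it from $\B$), the number of live chains --- and hence $|\B_{ep,2}(t)|$ --- equals the number $R_{n}(t)$ of rows whose active box was a normal box. By Lemma~\ref{Bntgamma}, such a normal box $B_{n}$ can have been used as an active box only if $n^{\gamma}\lesssim t$, i.e.\ $n\le(1+o(1))\,t^{1/\gamma}$; therefore $|\B_{ep,2}(t)|=R_{n}(t)\le(1+o(1))\,t^{1/\gamma}$. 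Combining everything gives $S_{ep,2}(t)=O\bigl(t^{-2+1/\gamma}\ln t\bigr)$, as required.

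The main obstacle I expect is the chain-bookkeeping step: verifying that the map ``second-kind endpoint $\mapsto$ the normal-box-rooted row beginning its chain'' is well-defined and a bijection (in particular, that a normal box never reappears as an active box after being cut). Once this structural fact is in place the remaining estimates are routine: Lemma~\ref{Bntgamma} supplies the $t^{1/\gamma}$ bound on $R_{n}(t)$, Assumption~\ref{prop1} contributes the logarithmic factor, and \eqref{condw} provides the $1/t^{2}$ factor.
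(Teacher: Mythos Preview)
Your proposal is correct and follows essentially the same route as the paper's own proof: factor $S_{ep,2}(t)$ as $\sum w(B)^2\cdot h(B)/w(B)$, bound each $w(B)$ by $(1+\delta)/t$ via \eqref{condw}, invoke Assumption~\ref{prop1} for the $O(\ln t)$ average of $h/w$, and then use Lemma~\ref{Bntgamma} to show $|\B_{ep,2}(t)|=O(t^{1/\gamma})$. Your ``chain'' bookkeeping is exactly the correspondence the paper asserts in one sentence (each $B\in\B_{ep,2}(t)$ corresponds to a unique normal box $B_n$ previously used as an active box), and your observation that Step~\ref{place-detail} removes $B_{act}$ from $\B$ is precisely what justifies that this correspondence is injective.
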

\begin{proof} 
Using inequality~\eqref{condw}, we conclude that with considered values of~$t$,
$$
S_{ep,2}(t)= \sum_{B\in{\mathcal B}_{ep,2}(t)} w^2(B) \frac{h(B)}{w(B)} = O\bigl(\frac{1}{t^2}\bigr)
 |{\mathcal B}_{ep,2}(t)|\times \frac{1}{|{\mathcal B}_{ep,2}(t)|}\sum_{B\in{\mathcal B}_{ep,2}(t)} \frac{h(B)}{w(B)}.
$$
In accordance with Assumption~\ref{prop1}, the latter cofactor is $O(\ln (t))$. 
We can estimate the number of endpoints of the second kind $|{\mathcal B}_{ep,2}|$ 
with the help of Lemma~\ref{Bntgamma}. We get any endpoint of the second kind when we initially pack a row of details in a normal box and, possibly, when we further pack details in obtained endpoints. Then any box $B\in{\mathcal B}_{ep,2}(t)$ at a fixed time moment~$t$ corresponds to exactly one normal box $B_n$ used at a certain time moment $\tau\leq t$ as the active box. By Lemma~\ref{Bntgamma}, $\tau\sim n^\gamma$. Therefore, the quantity of normal boxes, whose indices~$n$ satisfy the given asymptotic equality, is $O(t^{1/\gamma})$, whence we get the correlation
$$
S_{ep,2}(t)=O(t^{-2}) O(t^{1/\gamma}) O(\ln (t)).
$$
\end{proof}
\begin{proof}[Proof of Theorem~\ref{maintheorem}]
The following correlation is valid:
\begin{equation}
\label{finaleq}
S_{LRP}(t)+S_{norm}(t)+S_{ep,1}(t)+S_{ep,2}(t)=S_{com}(t).
\end{equation}
In addition, according to formula~\eqref{scom}, $S_{com}(t)\sim 1/t$. 
For $t\leq t_0$ the assertion of the theorem is valid by Lemma~\ref{first}, and there is no need in using assumptions~\ref{prop2} and~\ref{prop1}. 
Let us now consider the general case for all $t\in T_{crit}$.
Let us divide the left- and right-hand sides of equality~\eqref{finaleq} by $S_{com}(t)$.
The second addend satisfies inequality~\eqref{SLRP}, the third one does Lemma~\ref{lemma_Sep1}, the fourth one, according to Lemma~\ref{finallemma}, satisfies the bound $S_{ep,2}(t)/S_{com}(t)=o(1)$. 
\end{proof}   

\section{Conclusion~\label{concl}}
The graphs given below (see~Fig.~\ref{pic:1} and~Fig.~\ref{pic:2}) illustrate the dependence of the ratio of LRP on the value of the parameter $n_0$ with $\gamma=4/3$, when the Slack-Pack algorithm is being used for packing up to $10^{10}$ details.
The horizontal dashed line corresponds to the value of $1-1/\gamma$, while the vertical dash-dotted line does to the argument $n_0^\gamma\sim t_0$.
According to experimental results, after attaining the value $t = t_0$ the ratio of LRP stabilizes at the level slightly exceeding $1 - 1/\gamma - \delta$, where $\delta$ is some small value.

\begin{figure}[ht]
\begin{center}
\begin{tabular}{p{0.45\textwidth}p{0.45\textwidth}}
\includegraphics[width=0.45\textwidth]{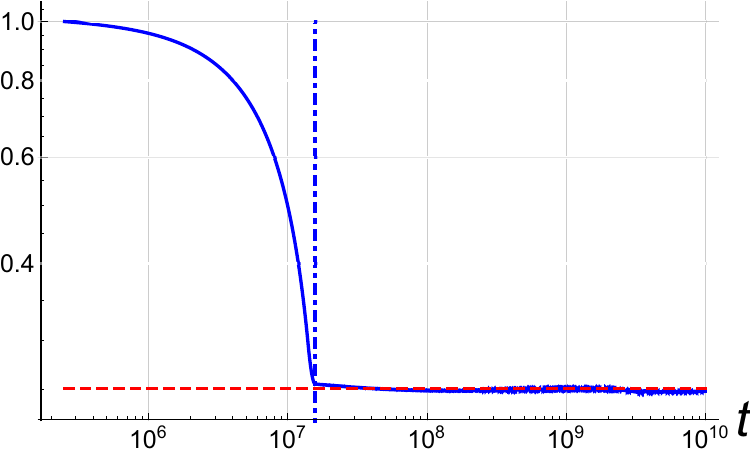} & \includegraphics[width=0.45\textwidth]{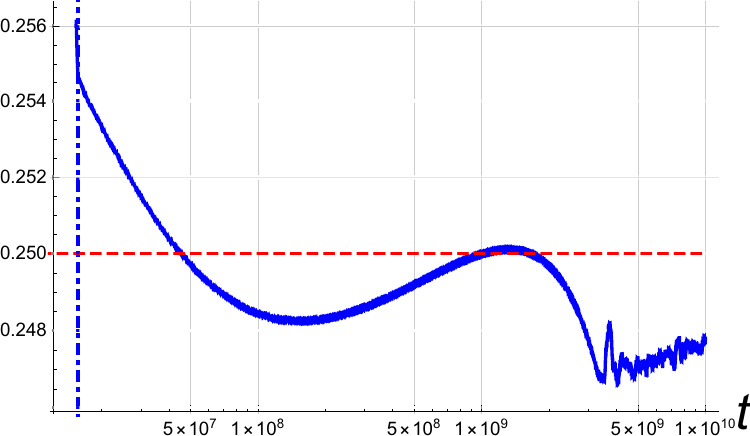} \\
\end{tabular}
\caption{\label{pic:1} The behavior of $S_{LRP}(t)/S_{com}(t)$ for sets of details $R_n$ with $\gamma=4/3$, $n_0=500^2$. Here $1-1/\gamma=0.25$, $n_0^\gamma\approx 1.5749\times 10^7$.}
\end{center}
\end{figure}
\begin{figure}[ht]
\begin{center}
\begin{tabular}{p{0.45\textwidth}p{0.45\textwidth}}
\includegraphics[width=0.45\textwidth]{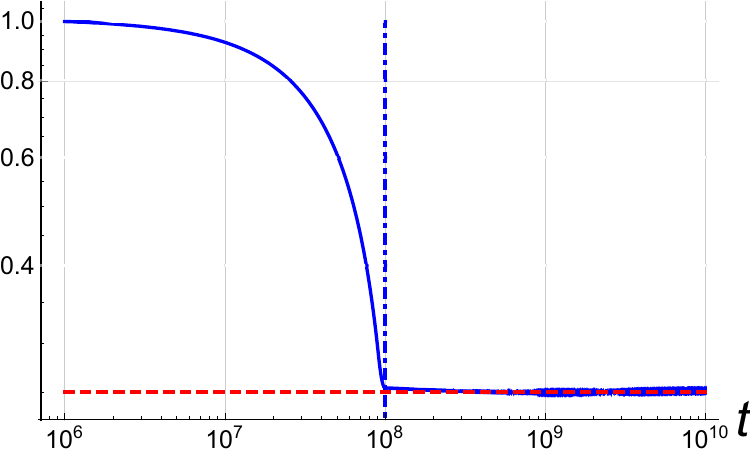} & \includegraphics[width=0.45\textwidth]{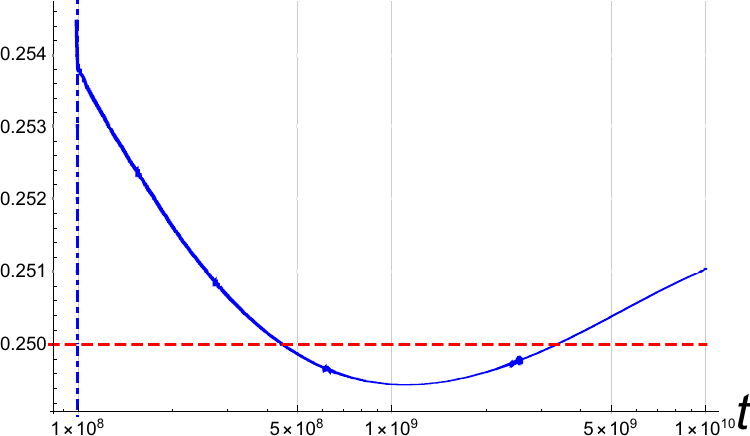} \\
\end{tabular}
\caption{\label{pic:2} The behavior of $S_{LRP}(t)/S_{com}(t)$ for sets of details $R_n$ with $\gamma=4/3$, $n_0=1000^2$. Here $1-1/\gamma=0.25$, $n_0^\gamma=10^8$.}
\end{center}
\end{figure}

We have also performed certain experiments for verifying inequality~\eqref{neq} in Assumption~\ref{prop2}. For the value $1+\sigma$ we used the expression $h({\widetilde B})^\gamma / w({\widetilde B})$, where ${\widetilde B}$ denotes the last normal box in the first stripe. 
This choice is motivated by the fact that among all normal boxes of the first kind this particular box yields the smallest value of the ratio $h(B)^\gamma / w(B)$.
Experiments were performed for sets of details $R_n$ with the same values of $n_0$ and $\gamma$ as those used in previous experiments. According to obtained results, inequality~\eqref{neq} is fulfilled for all considered normal boxes.
See also the Appendix~\ref{secA1} for theoretical and experimental arguments in favor of Assumption~\ref{prop1}. 

We hope for succeeding in complete theoretical justification of our assumptions or their modified versions. The performed experiments (results of some of them are adduced above) confirm the main proposition of this paper, namely, the fact that the ratio of LRP possesses a predictable asymptotic behavior. Just this property essentially distinguishes our algorithm from those considered previously.

\vspace{4pt}
\noindent{\bf Acknowledgements.}
The authors is grateful to Olga Kashina for her help in the translation of this paper.  

\vspace{4pt}
\noindent{\bf Data Availability.} 
The data is publicly available in public repositories. See \href{https://github.com/AlexeyKislovskiy/Slack-Pack-algorithm}{https://github.com/AlexeyKislovskiy/Slack-Pack-algorithm} for the code of the program that visualizes the work of various packing algorithms. See also the code of the program in \href{https://github.com/sensen65/Slack-Pack-algorithm-Meir-Moser}{https://github.com/sensen65/Slack-Pack-algorithm-Meir-Moser} that implements the Slack-Pack  algorithm for $10^{10}$ details.

\subsection*{Declarations}

\noindent{\bf Conflict of interest.}
The authors have no conflicts of interest relevant to the content of this article.

\begin{appendices}

\section{About Assumption~\ref{prop1}}\label{secA1}

In this appendix, we theoretically study the asymptotic behavior of the mean value to of the ratio between the height and the width of endpoints of the second kind at time moment $t$, provided that the width of boxes obtained earlier at time moment~$n$ represents a random value uniformly distributed on a segment, which is defined by the algorithm (from $1/n^\gamma$ up to the value that equals the height of boxes, whose asymptotics is $1/n$).

Note that the quantity of endpoints of the second kind is being accumulated, because each cut of an endpoint of the second kind gives a new endpoint of the second kind. Therefore, their quantity coincides with the number of normal boxes that were cut. Assume that this number at time moment~$n$ asymptotically equals the value indicated in Lemma~\ref{Bntgamma} in the main text, namely, $n^{1/\gamma}$.

Completing our research, we compare theoretical qualitative results with experimental data that illustrate the behavior of the mean value of the ratio between the height and width of endpoints of the second kind with various values of the parameter~$\gamma$. In addition, we compare graphs that demonstrate the quantity of endpoints of the second kind with our theoretical estimates.

In accordance with propositions in the main part of the paper, we assume that all endpoints that exist at time moment~$n$ and are such that their width exceeds $1/t$ are already used by time moment~$t$. Therefore, widths of boxes obtained at time moment~$n$, which remain non-cut by time moment~$t$, are assumed to be random values uniformly distributed on the segment $[1/n^\gamma, 1/t]$.
In this research, we replace positive integer values $n$ with a continuous parameter, which takes on values in the corresponding range. Evidently, the inequality $1/n^\gamma < 1/t < 1/n$ has to be fulfilled. It is equivalent to the following constraint imposed on $n$: 
\begin{equation}
\label{eqn_on_n}
t^{1/\gamma}<n<t.
\end{equation}

\begin{lemma}
\label{appen1}
Assume that $\gamma>1$, $n^\gamma>t>1$, and $\xi_n$ is a random value uniformly distributed on the segment $[1/n^\gamma,1/t]$. 
Then the mean value of the random value $\eta_n=1/(n\xi_n)$ equals
$$
\mathbb{E} \eta_n =\frac{t\, n^{\gamma -1} \left(\ln (n^{\gamma})-\ln(t)\right)}{n^{\gamma }-t}.
$$
\end{lemma}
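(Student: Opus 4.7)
The statement reduces to a one-variable moment computation, so the plan is simply to evaluate $\mathbb{E}\eta_n$ directly from the density of $\xi_n$. First I would verify that the hypothesis $n^\gamma>t>1$ makes $1/n^\gamma<1/t$, so the segment $[1/n^\gamma,1/t]$ is nondegenerate; its length is $1/t-1/n^\gamma=(n^\gamma-t)/(t\,n^\gamma)$, and therefore the uniform density of $\xi_n$ on that segment is the reciprocal $t\,n^\gamma/(n^\gamma-t)$.

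Then I would write the expectation as
$$\mathbb{E}\eta_n=\int_{1/n^\gamma}^{1/t}\frac{1}{n\,x}\cdot\frac{t\,n^\gamma}{n^\gamma-t}\,dx=\frac{t\,n^{\gamma-1}}{n^\gamma-t}\int_{1/n^\gamma}^{1/t}\frac{dx}{x},$$
pull the constants out, and evaluate the remaining elementary integral via the antiderivative of $1/x$, obtaining $\ln(1/t)-\ln(1/n^\gamma)=\ln(n^\gamma)-\ln(t)$. Substituting this back yields exactly the formula claimed in the lemma.

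There is no genuine obstacle here: the entire argument is a one-line change of variables plus the antiderivative of $1/x$. The only point worth flagging in the written proof is that the denominator $n^\gamma-t$ is strictly positive by hypothesis, so the density is well defined, the direction of integration is the usual one, and the resulting expression for $\mathbb{E}\eta_n$ comes out positive, as it must since $\eta_n=1/(n\xi_n)>0$ almost surely.
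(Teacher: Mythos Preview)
Your argument is correct and is essentially the same computation as the paper's, only organized more directly: the paper first derives the density of $1/\xi_n$ on $[t,n^\gamma]$ and then integrates $y/n$ against it, whereas you integrate $1/(nx)$ against the uniform density of $\xi_n$ without the intermediate change of variables. Both routes reduce to evaluating $\int dx/x$ between the same limits, so there is no substantive difference.
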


\begin{proof}
The cummulative density function $F(x)$ of the random value $\xi_n$, which is uniformly distributed on the segment $[1/n^\gamma,1/t]$, takes (on this segment) the following form: 
$$
F(x)=\frac{t \left(x\, n^\gamma-1\right)}{n^\gamma-t}. 
$$
Therefore, the probability density function $f(y)$ of the random value $1/\xi$ is nontrivial on the segment $[t,n^\gamma]$ and takes on it the following form:
$$
f(y)=\frac{d}{dy}(1-F(1/y))=\frac{t\,n^{\gamma }}{y^2 \left(n^{\gamma}-t\right)}.
$$
By integrating the function $f(y)y/n$ over the segment $[1/n^\gamma,1/t]$, we get the assertion of the lemma.
\end{proof}

We have estimated the mean value of the ratio between the height and the width of endpoints obtained at time moment~$n$, which were non-cut by time moment~$t$.
All these values of $n$ satisfy condition~\eqref{eqn_on_n}. However, at time moment~$t$, we have endpoints which were obtained at various time moments~$n$. Therefore, we are interested in the mean value of a mixture of random values. For calculating it, we need to estimate the probability density function for endpoints which were obtained at time moment~$n$ and remained non-cut by time moment~$t$. 
 
Assume that the total number of endpoints of the second kind at time moment~$n$, in accordance with Lemma~\ref{Bntgamma}, equals $n^{1/\gamma}$. Moreover, widths of endpoints obtained at time moment~$n$ are uniformly distributed on the segment $[1/n^\gamma, 1/n]$. The percentage of these endpoints, whose widths are less than $1/t$, is defined by the function $G(n,t)$, which takes the form
$$
G(n,t)=\frac{1/t-1/n^\gamma}{1/n-1/n^\gamma}.
$$
Only these endpoints remain non-cut by time moment~$t$.
Therefore, in accordance with our assumptions, the probability distribution function of the parameter $n$ for random values $\xi_n$ obeys the formula
$$
G(n,t) \frac{n^{1/\gamma}}{t^{1/\gamma}},
$$
where $n$ satisfies condition~\eqref{eqn_on_n}, while $t^{1/\gamma}$ is a normalizing constant.
Correspondingly, the probability density function for the mixture of random values $g(n)$ takes the form
$$
g(n)=\frac{d}{t^{1/\gamma} dn} \left(  G(n,t) n^{1/\gamma} \right). 
$$

\begin{lemma}
\label{appen2}
Let assumptions of Lemma~\ref{appen1} be fulfilled. Denote by the symbol $e(t)$ the mean value of the mixture of random values $\eta_n$ with the probability density function $g(n)$, $t^{1/\gamma}<n<t$.
Then 
$$
\lim_{t\to\infty} \frac{e(t)}{\ln(t)}=\gamma^2-1.
$$
\end{lemma}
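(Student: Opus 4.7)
The plan is to combine the explicit formula from Lemma~\ref{appen1} with the density $g(n)$ defined in the paper, and analyze $e(t)$ via a Laplace-type asymptotic argument. Concretely,
$$
e(t) \;=\; \int_{t^{1/\gamma}}^{t} \frac{t\, n^{\gamma-1}(\gamma\ln n - \ln t)}{n^\gamma - t}\, g(n)\, dn,
$$
where $g(n) = t^{-1/\gamma}\,\frac{d}{dn}[G(n,t)\, n^{1/\gamma}]$ and the algebraic simplification $G(n,t) = (n^\gamma - t)\,n / [t(n^\gamma - n)]$ makes $g(n)$ an explicit rational function amenable to direct estimation.

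Next I would substitute $n = t^s$, $s \in (1/\gamma, 1)$, with $dn = t^s \ln t\, ds$. For $s$ bounded away from $1/\gamma$ the inequality $\gamma s > 1$ gives $n^\gamma \gg t$, so both $n^\gamma - t$ and $n^\gamma - n$ are asymptotic to $n^\gamma$. A short calculation then reduces the integrand to
$$
\bigl(1 + \tfrac{1}{\gamma}\bigr)(\gamma s - 1)(\ln t)^2\, t^{(s-1)/\gamma}\, ds.
$$
Since $t^{(s-1)/\gamma} = e^{-(1-s)\ln t/\gamma}$ concentrates sharply near $s=1$, the change of variables $v = (1-s)\ln t$ produces
$$
\frac{e(t)}{\ln t} \;\sim\; \bigl(1+\tfrac{1}{\gamma}\bigr)\int_0^{(1-1/\gamma)\ln t} \bigl(\gamma - 1 - \tfrac{\gamma v}{\ln t}\bigr) e^{-v/\gamma}\, dv,
$$
and dominated convergence yields the stated limit $(1+1/\gamma)(\gamma-1)\gamma = \gamma^2-1$.

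The main obstacle is justifying the asymptotic simplification uniformly across the domain, in particular near the left endpoint $s = 1/\gamma$, where $n^\gamma - t$ vanishes and the bulk approximation fails. A direct expansion at $n = t^{1/\gamma}$ (by L'H\^opital, or by noting that $\xi_n$ degenerates to the point $1/t$ there) shows that $\mathbb{E}[\eta_n]$ remains bounded by $t^{(\gamma-1)/\gamma}$, and the density $g(n)$ stays bounded as well; moreover, the factor $\gamma s - 1$ in the simplified integrand automatically suppresses the contribution near $s = 1/\gamma$. I would therefore split the integration into a shrinking neighborhood of $s = 1/\gamma$, where crude $L^\infty$ bounds show the contribution is $o(\ln t)$, and its complement, where the asymptotic reductions above are uniformly valid and produce the leading $(\gamma^2 - 1)\ln t$ behavior.
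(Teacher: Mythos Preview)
Your argument is correct and arrives at the same limit, but it proceeds along a genuinely different route from the paper. The paper substitutes $z=(n^\gamma-t)/t$, checks that the integrand $r(z,t)$ stays continuous at $z=0$, argues that the contribution from $z\in[0,o(t^{1/\gamma-1/\gamma^2})]$ is negligible, and then replaces $z$ by $z+1$ in the remaining range to obtain a closed-form antiderivative whose value is $(\gamma^2-1)\ln t$ up to $O(1)$ terms. Your substitution $n=t^s$ instead exposes the Laplace structure directly: the factor $t^{(s-1)/\gamma}$ concentrates the mass at $s=1$, and dominated convergence finishes the job after the change $v=(1-s)\ln t$. Your approach is conceptually cleaner and avoids the somewhat ad hoc ``replace $z$ by $z+1$'' step, while the paper's approach has the advantage of producing an explicit expression for the leading integral rather than just its limit. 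Your handling of the left endpoint $s=1/\gamma$ via $L^\infty$ bounds is the natural counterpart of the paper's continuity check at $z=0$; both serve to show the boundary layer contributes $o(\ln t)$.
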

\begin{proof}
We need to calculate the asymptotics of the integral
$$
\int_{t^{1/\gamma}}^t g(n)\, \mathbb{E} \eta_n \,dn
$$
with $t\to\infty$. For convenience, we change variables in the integrand:
$$
z = \frac{n^\gamma - t}{t}, \quad \text{i.e.,} \quad n = (t(1+z))^{1/\gamma}, \quad dn = \frac{t (t(1+z))^{1/\gamma - 1}}{\gamma} \, dz.
$$ 
Then we need to integrate the integrand $r(z,t)$ with respect to $z$ varying from 0 to nearly $t^{\gamma - 1}$ (more precisely, to $t^{\gamma - 1} - 1$).
One can easily make sure that the function $r(z,t)$ is continuous in $z$ on the integration interval and
$$\lim_{z\to 0} r(z,t)=\frac{t^{\frac{1}{\gamma ^2}-\frac{1}{\gamma}+1}}{t-t^{\frac{1}{\gamma }}}.
$$
Therefore, when calculating the limit value of $e(t)$, we can neglect the value of the integral of $r(z,t)$ with respect to $z\in [0,o(t^{\frac{1}{\gamma}-\frac{1}{\gamma^2}})]$ . 
Performing the integration over the remaining part of the integration domain, we can replace $z$ in the integrand with $z + 1$; this does not affect the asymptotics.
After this transform we conclude that the asymptotics of $e(t)$ coincides with that of the integral
$$
I=\int_0^{t^{\gamma-1}} \frac{(\gamma +1) t^{-1/\gamma } (t
   (z+1))^{\frac{1}{\gamma ^2}}\ln (z+1)}{\gamma ^2 (z+1)}\, dz,
$$
whose value can be calculated explicitly. It equals
$$
I=(\gamma +1)\, t^{-1/\gamma } \left(t^{\gamma
   }+t\right)^{1/\gamma ^2} \left(\gamma ^2
   \left(\left(\frac{t}{t^{\gamma
   }+t}\right)^{1/\gamma ^2}-1\right)+\ln
   (t^{\gamma -1}+1)\right).
$$
The first term in the latter braces is $O(1)$, and we can neglect it. Then 
$$
I\sim (\gamma +1)\,t^{-1/\gamma } \left(t^{\gamma
   }+t\right)^{1/\gamma ^2}\ln
   (t^{\gamma -1})\sim (\gamma^2-1)\ln (t).
$$
\end{proof}

\begin{remark}
\label{appnote1}
By our assumption, the quantity of endpoints of the second kind, which exist at time moment~$n$ equals $n^{1/\gamma}$. 
At the same time, at time moment~$t$, the quantity of those endpoints among them that remain non-cut is only 
$$
G(n,t) n^{1/\gamma} \sim  \frac{n^{1/\gamma+1}}{t}\quad \text{with $n\gg t^{1/\gamma}$}.
$$
\end{remark}

\begin{figure}[ht]
\begin{center}
\begin{tabular}{p{0.45\textwidth}p{0.45\textwidth}}
\includegraphics[width=0.45\textwidth]{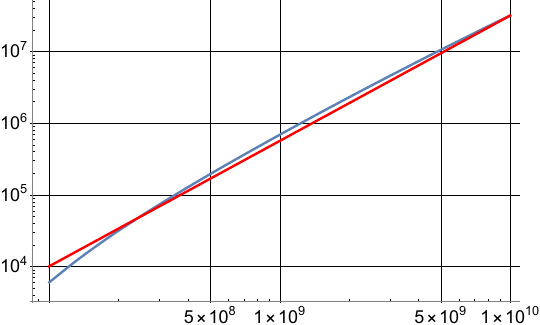} & \includegraphics[width=0.45\textwidth]{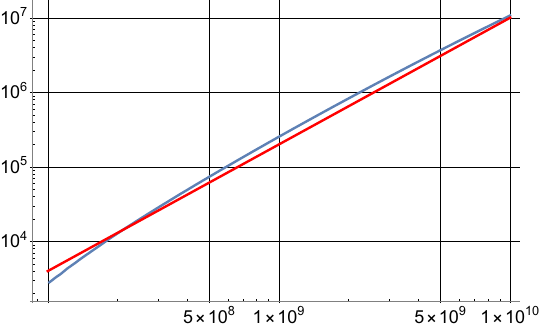} \\
\end{tabular}
\caption{\label{pic:3} The actual (blue line) and predicted (red line) number of endpoints of the second kind, which were obtained by time moment $n$ and remained non-cut up to time moment $t = 10^{10}$, with $n_0 = 500^2$ for a set of details $R_n$. The left graph corresponds to $\gamma = 4/3$, the right one does to $\gamma = 10/7$.}
\end{center}
\end{figure}

\begin{figure}[ht]
\begin{center}
\begin{tabular}{p{0.45\textwidth}p{0.45\textwidth}}
\includegraphics[width=0.45\textwidth]{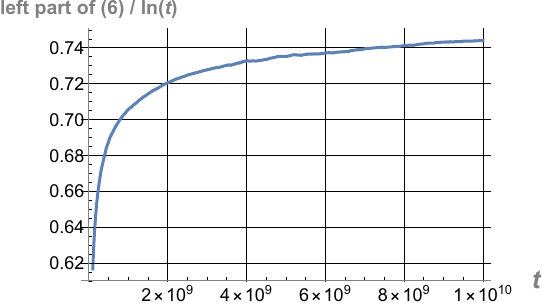} & \includegraphics[width=0.45\textwidth]{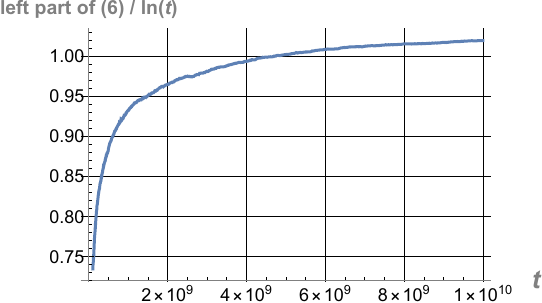} \\
\end{tabular}
\caption{\label{pic:4} The behavior of the mean value of the ratio of between the height and the width (see formula~\eqref{eq_prop}) divided by $\ln(t)$, for endpoints of the second kind with $\gamma=4/3$ and $\gamma=10/7$, $n_0=500^2$ for a set of details $R_n$. In the first case,  $\gamma^2-1$ equals $7/9\approx 0.778$, in the second one it does $51/49\approx 1.041$.}
\end{center}
\end{figure}

In Fig.~\ref{pic:3}, the last straight line (for $t=10^{10}$, $\gamma=4/3$ and $\gamma=10/7$) is colored in red. Real data on the number of endpoints of the second kind, which were obtained before time moment~$n$ and remained non-cut up to time moment~$t$, are shown in this figure as blue lines. Evidently, the red straight line well agrees with these data.

In Fig.~\ref{pic:4}, for the same values of $\gamma$, we adduce graphs that demonstrate the applicability of assertions of Lemma~\ref{appen2} to real data.

\end{appendices}

\end{document}